\numberwithin{equation}{section}
\newtheorem{theorem}{Theorem}
\newtheorem{lemma}{Lemma}
\newtheorem*{corollary}{Corollary}
\newcommand{\twolinesum}[2]{\sum_{\substack{{\scriptstyle #1}\\
{\scriptstyle #2}}}}
\numberwithin{theorem}{section}
\numberwithin{lemma}{section}
\newcommand{\threelinesum}[3]{\sum_{\substack{{\scriptstyle #1}\\
{\scriptstyle #2}\\ {\scriptstyle #3}}}}
\def\ca {{\cal A}}
\def\cb {\mathcal{B}}
\def\car {{\cal R}}
\def\cj {{\cal J}}
\def\cf {{\cal F}}
\def\cg {{\cal G}}
\def\ch {\mathcal{H}}
\def\ck {{\cal K}}
\def\ci {{\cal I}}
\def\cl {{\cal L}}
\def\cs {{\cal S}}
\def\cm {{\cal M}}
\def\beq {\begin{equation}}
\def\endq {\end{equation}}
\def\frakD{{\mathfrak D}}
\def\ba {\boldsymbol{\alpha}}
\def\bl {\boldsymbol{\lambda}}
\def\bw {{\bf w}}
\def\sq {{\sum_{\chi \bmod{q}}\kern-6pt}^*}
\begin{document}

\title{Representing an integer as the sum of a prime and the product of two small factors}

\author{ROGER BAKER and GLYN HARMAN}

\maketitle

\begin{abstract}
\noindent \emph{Abstract.}
Let $\epsilon > 0$. We show that every large integer $n$ may be written in the form
\[
n=p+ab\,,
\]
where $a,b \le n^{\frac12 - \delta}$ for a positive absolute constant $\delta$, and $ab \le n^{0.55 + \epsilon}$. This sharpens a result of Heath-Brown \cite{DRHB}. The improvement depends on a lower bound version of Bombieri's theorem in short intervals. In establishing such a result we shall need to ``intersect" two lower bound prime-detecting sieves, and we give a more general discussion on this point which may have further applications. 

\vskip6pt
\noindent
MSC (2020): 11N13 (Primary).

\end{abstract}

\section{Introduction}
Answering a question raised by M. Car, Heath-Brown \cite{DRHB} showed that a large integer $n$ can be written as
\beq\label{1.1}
n=p + ab
\endq
where $p$ is prime  and $a,b$ are positive integers,
\beq\label{1.2}
\max(a,b) \le p^{\frac12 - \delta}
\endq
(where $\delta$ denotes a positive absolute constant) with
\beq\label{1.3}
ab \le n^{\theta}
\endq
where $\theta < 1$. In \cite{DRHB} it is shown that $\theta$ may be assigned any value $> \frac34$. 

Recently L\"u and Ren \cite{Xlu} claimed a sharpening of the above result, requiring only $\theta > \frac35$ in \eqref{1.3}. Unfortunately, there is a substantial error in the last paragraph of \cite{Xlu}. 

In the present paper the following theorem is proved.

\begin{theorem}
Given $\epsilon > 0$, 
every large integer $n$ can be written in the form \eqref{1.1} with $a, b$ positive integers subject to \eqref{1.2} and
\[
ab \le n^{0.55 + \epsilon}.
\]
\end{theorem}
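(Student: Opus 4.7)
\medskip\noindent\textbf{Proof proposal.}
The plan is to estimate from below the weighted sum
\[
S(n) \;=\; \twolinesum{a,\, b \,\le\, n^{1/2-\delta}}{ab \,\le\, n^{0.55+\epsilon}} \Lambda(n - ab)
\]
and show $S(n) > 0$ for all sufficiently large $n$, which immediately yields a representation $n = p + ab$ of the desired shape (since $p = n - ab \sim n$ forces $p^{1/2-\delta}$ and $n^{1/2-\delta}$ to differ by a negligible factor absorbable into $\delta$). Exchanging the order of summation, $S(n) = \sum_{a \le n^{1/2-\delta}} S_a(n)$ where
\[
S_a(n) \;=\; \twolinesum{1 \le b \le n^{1/2-\delta}}{ab \le n^{0.55+\epsilon}} \Lambda(n-ab)
\]
is essentially $\psi(n;a,n) - \psi(n - aB(a);a,n)$ with $B(a) = \min(n^{1/2-\delta},\, n^{0.55+\epsilon}/a)$. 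This counts prime powers in the arithmetic progression $\equiv n \pmod{a}$ lying in a short interval of length $aB(a) \le n^{0.55+\epsilon}$ just below $n$. The problem thus reduces to proving, on average over $a \le n^{1/2-\delta}$, a lower bound of the expected order $aB(a)/\phi(a)$.

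The central analytic input would be the short-interval lower-bound Bombieri--Vinogradov theorem promised in the abstract. Intervals of length $x^{7/12+\epsilon}$ and moduli $a \le x^{1/2-\epsilon}$ admit an asymptotic BV by the Jutila--Perelli--Pintz machinery, but at interval length $n^{0.55+\epsilon}$ we are below that threshold and must replace asymptotic by lower bound. I would do so via a Harman-style sieve applied to the prime indicator: decompose by Buchstab iteration, controlling the Type I sums by a large-sieve inequality in short intervals, and the Type II sums by a Bombieri--Friedlander--Iwaniec dispersion estimate for bilinear forms over primes in short intervals and progressions. This should yield $\sum_{a \le Q} S_a(n) \ge (c - o(1)) \sum_{a \le Q} aB(a)/\phi(a)$ for a positive constant $c$ and a suitable $Q \le n^{1/2-\delta}$.

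The novelty, as the abstract indicates, is to run this lower-bound sieve in tandem with a second lower-bound sieve encoding the divisor constraint on $m = ab$. Indeed, $m$ admitting a factorisation $m = ab$ with $a, b \le n^{1/2-\delta}$ is equivalent to $m$ having a divisor in the window $[m/n^{1/2-\delta},\, n^{1/2-\delta}]$; this is itself a divisor-restricted sieve condition amenable to Buchstab iteration. The proof would then interleave the Buchstab decompositions of the two sieves, pairing surviving pieces so that each pair is either (i) handled directly by the short-interval BV, or (ii) reduced to a bilinear form with variables in a range where a Type II estimate applies, or (iii) discarded with an acceptable numerical loss.

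The principal obstacle is precisely this intersection step. Two lower bounds of the form $\mathrm{Main} - E_1$ and $\mathrm{Main} - E_2$ have no useful product, so one cannot treat the prime sieve and the factorisation sieve independently; instead one must work inside the joint Buchstab expansion, tracking the weights $\rho(p_1 \cdots p_k)$ on each side simultaneously and verifying piece by piece that the resulting weighted short-interval sums can still be evaluated or bilinearly bounded. Formulating a general framework for when such joint iterations yield a positive main term, and then optimising the numerical parameters to reach $0.55$ rather than Heath-Brown's $3/4$, will be where the bulk of the work lies --- and where I would expect to locate (and avoid) the mistake in \cite{Xlu} that prevented the argument there from reaching $3/5$.
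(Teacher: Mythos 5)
The part you get right is the central analytic input: a short-interval lower-bound Bombieri--Vinogradov theorem, proved by a Harman-style sieve and fed by large-sieve / Dirichlet-polynomial mean-value estimates. But the overall architecture of the proof does not follow your plan, and your plan has a genuine quantitative gap.

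First, your reading of ``intersecting two prime-detecting sieves'' is not what the paper means. You interpret the second sieve as encoding the divisor constraint on $m=ab$ (a sieve for $m$ having a divisor in a window). In fact both sieves in the paper detect \emph{primes}: the lower-bound BV (Theorem 1.2) has to hold uniformly for moduli $q$ up to $Q_0\approx n^{0.05}$, and the available mean-value machinery gives permissible Buchstab decompositions that differ according to whether $q\le \mathcal{L}^{A+B}$ or $q>\mathcal{L}^{A+B}$. The ``intersection'' (Theorems 5.1, 6.1) is a device to produce a single minorant $\lambda\le\rho$ whose decomposition is permissible for \emph{both} modulus ranges simultaneously, at the cost of adding the two deficits. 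It has nothing to do with factorisation constraints, and no divisor sieve appears anywhere in the paper.

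Second, and more importantly, your direct lower bound $S(n)=\sum_{a,b\le n^{1/2-\delta},\,ab\le n^{0.55+\epsilon}}\Lambda(n-ab)$ cannot be made to work by a BV theorem of the stated strength, because the constraint $b\le n^{1/2-\delta}$ shrinks the expected main term below the BV error. Concretely, for $a\le Q\approx n^{0.05}$ (the largest modulus the short-interval BV can reach) and $B(a)=n^{1/2-\delta}$, the interval for $S_a(n)$ has length $aB(a)$ and the total main term $\sum_{a\le Q}aB(a)/\phi(a)\ll Qn^{1/2-\delta}\ll n^{0.55-\delta}$, while the error in \eqref{1.4} is only $\ll n^{0.55+\epsilon}\mathcal{L}^{-A}$, which is \emph{larger} for any fixed $\delta>0$. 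So you cannot deduce $S(n)>0$ this way. The paper (following Heath-Brown) avoids this by an indirect argument: it counts
\[
N=\#\{(p,u,v):n=p+uv,\ y\le uv<2y,\ Q<u\le 2Q,\ (u,n)=1\}
\]
\emph{without} imposing $v\le n^{1/2-\delta}$, so the main term from Theorem 1.2 is genuinely $\gg y/\log n$, and then shows via a Selberg-sieve upper bound (Lemmas 7.1--7.2 and the bound \eqref{7.4}--\eqref{7.5}) that if \emph{no} representation had $a,b\le n^{1/2-\delta}$, then every $v$ occurring would be forced to avoid prime factors in a range $[q_1,q_2)$, forcing $N\ll yf(n)/(\log n\log(1/\delta))$, which contradicts \eqref{7.2} once $\delta$ is small. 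This contradiction device, not a second sieve, is how the factorisation constraint is handled, and it is essential: there is no room to impose $b\le n^{1/2-\delta}$ directly inside the BV sum.

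Finally, a minor point: you invoke Bombieri--Friedlander--Iwaniec dispersion for the Type II input. The paper's Type II estimate for large $Q$ is the much more elementary Lemma 2.2 (a Cauchy--Schwarz plus Montgomery mean-value square bound, with the key point being that $Q>\mathcal{L}^{A+B}$ absorbs the logarithmic losses), supplemented by the hybrid fourth-power and pointwise bounds of Lemmas 2.3--2.4 for the Type I/II variants in Lemma 2.5. No dispersion estimate is used.
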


The key auxiliary result that we use  is given below as Theorem 1.2. We require some notation. Let $x$ be a large positive number. 
Write $\cl = \log x$ and $\rho(n)$ for the indicator function of the primes. Given an arithmetic function $f(n)$, let
\[
E_f(y,h;q,a) = \twolinesum{y-h<n\le y}{n \equiv a \bmod{q}} f(n) \, -\, \frac{h}{\phi(q)h_0} \sum_{y-h_0 < n \le y} f(n),
\]
where $\frac12 x < y \le x$ and $h_0 = x \exp\left(-3 \cl^{\frac13}  \right)$.
Constants implied by the $\ll$ notation will depend at most on two parameters $A$ and $\epsilon$ unless otherwise indicated. We write $B$ for a positive absolute constant, not the same at each occurrence. We comment on one consequence of this convention: when we put a hypothesis such as $\max(M,N) \ll x^{\theta} \cl^{-2A-B}$  in a result (see \eqref{2.4} below, for example), we mean the value $B$ is chosen to cancel out  all $\cl^B$ factors entering in the proof.  We write $n \asymp N$ to indicate $n \in [N/B,BN]$ and $n \sim N$ for $n \in (N/2,N]$. 

\begin{theorem}
Let $\theta = 0.55 + \epsilon$.  There is an arithmetic function $\lambda(n)$ with the following properties.

\smallskip
\noindent
\quad (i) We have
\[
\lambda(n) \le \rho(n) \quad (2 \le n \le x),
\]
and
\[
\text{\rm if} \ \lambda(n)  \ne 0 \  \  \text{\rm then}  \ \ p|n \Rightarrow p \ge x^{\frac{1}{10}}.
\]

\noindent
\quad (ii) If \ $\frac12 x \le y < x$, then
\[
\sum_{y-h_0 < n \le y} \lambda(n) \gg h_0 \cl^{-1}.
\]

\smallskip
\noindent
\quad (iii) Let $A>0$, $Q= x^{\theta - \frac12} \cl^{-C}$ where C=C(A).

\smallskip
\noindent
Then
\beq\label{1.4}
\sum_{q \le Q} \max_{(a,q)=1} \max_{h \le x^{\theta}} \max_{y \sim x}
\left|E_{\lambda}(y,h;q,a)  \right|
\ll x^{\theta} \cl^{-A}.
\endq
\end{theorem}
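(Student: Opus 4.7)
The plan is to build $\lambda$ by a Harman-type sieve decomposition. Writing $\psi(n,z)$ for the indicator of integers with no prime factor below $z$, start from Buchstab's identity
\[
\rho(n) = \psi(n,x^{1/10}) - \sum_{x^{1/10}\le p\le n^{1/2}}\psi(n/p,p)
\]
and iterate further on each $\psi(n/p,p)$. Every iteration expresses $\rho(n)$ as a signed sum of convolutions, and each convolution contributes to $E_\rho$ a discrepancy that can be controlled provided the lengths of its coefficient sequences fall in a range handled by our arithmetic input. When an iteration produces a piece whose ``Type'' lies outside the admissible range, I would discard it using the sign of Buchstab's identity, producing a lower bound $\lambda \le \rho$ rather than an equality. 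Because every retained piece is a restriction of $\psi(n,x^{1/10})$ or $\psi(n/p,p)$ with $p \ge x^{1/10}$, the support condition in (i) is automatic.

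\textbf{Arithmetic input.} The combinatorial step requires short-interval Bombieri--Vinogradov estimates: for any convolution $f=\alpha*\beta$ with $\alpha$ supported on $m\sim M$, $\beta$ on $\ell\sim N$, and $MN\asymp x$, one needs
\[
\sum_{q\le Q}\max_{(a,q)=1}\max_{h\le x^{\theta},\,y\sim x}|E_f(y,h;q,a)|\ll x^{\theta}\cl^{-A}
\]
for admissible pairs $(M,N)$. Since $\theta<3/4$, we have $Q x^{\theta}=x^{2\theta-1/2-\epsilon}<x^{1-\epsilon}$, which is the level condition under which a large sieve inequality for Dirichlet polynomials restricted to $(y-h,y]$ can deliver the required savings. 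I would prove such estimates via Heath-Brown's identity (to decompose the von Mangoldt-type weight into bilinear pieces), a dyadic decomposition of the range of the smooth factor, and a mean-value estimate for short-interval Dirichlet polynomials. Type I inputs (one factor trivial) cover a strictly wider range than Type II and are exploited whenever possible by choosing the decomposition to absorb one variable into a smooth coefficient.

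\textbf{Intersection and main obstacle.} The novel ingredient, flagged in the abstract, is the intersection of two lower-bound prime-detecting sieves: at the restrictive level $\theta = 0.55 + \epsilon$, a single Buchstab decomposition exhausts the available arithmetic information prematurely. The fix is to run two distinct sieves $\lambda_1,\lambda_2$, each separately satisfying $\lambda_i\le\rho$ together with its own distribution estimate, and to combine them by an identity that preserves $\lambda\le\rho$ while inheriting the distribution strength of both — not a naive minimum or product, which would spoil either the majorisation or the bilinear structure. The main obstacle is the final numerical verification of (ii): after all discarded Buchstab pieces and the intersection step, the mass of the resulting weight on $(y-h_0,y]$ must still be shown to be $\gg h_0/\cl$. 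This reduces to a Buchstab integral inequality that must be strictly positive at $\theta=0.55+\epsilon$, and it is precisely this inequality that forces the value $0.55$ and dictates the ranges chosen in the two-sieve intersection.
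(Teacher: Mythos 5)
Your overall strategy — Buchstab iteration starting from $\psi(n,x^{1/10})$, controlling each retained bilinear piece via a short-interval Bombieri--Vinogradov mean value, discarding unmanageable negative pieces to get $\lambda\le\rho$, and then ``intersecting'' two sieves to handle all moduli simultaneously — is the right skeleton, and the positivity requirement in (ii) as a Buchstab-integral condition is also correctly identified. But there is a genuine gap precisely at the point your abstract flags as novel: you gesture at ``an identity that preserves $\lambda\le\rho$ while inheriting the distribution strength of both'' without saying what that identity is, and the naive candidates you (rightly) reject are in fact the only obvious ones. The paper's actual mechanism is \emph{not} to take two finished weights $\lambda_1,\lambda_2$ and combine them; it is to run a \emph{single} decomposition whose discarded region at each Buchstab stage is the \emph{union} of the regions discarded by the two individual decompositions, so that every retained piece is simultaneously regular for both families of functionals. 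The key structural fact that makes this a useful sieve — and the thing your sketch is missing — is that the deficit of this intersected decomposition is at most the \emph{sum} $\delta(\frakD_1)+\delta(\frakD_2)$ of the two individual deficits (Theorem 5.1), so the sieve is non-trivial as soon as $\delta(\frakD_1)+\delta(\frakD_2)<1$. Without stating and proving this additivity, you have no way to conclude (ii).

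A second, related gap: you misidentify what the two sieves are doing. They are not two Buchstab decompositions competing at the same level of arithmetic information; they correspond to two different ranges of moduli. One sieve is built from the new Type~II input $M\in[x^{0.45},x^{0.55}]$ (Lemma 3.2) and the associated Type~I/II estimate (Lemma 3.3), which are available for $\cl^{A+B}<Q\le x^{\theta-1/2}\cl^{-A-B}$; the other is imported wholesale from \cite{BHP} for $Q\le\cl^{A+B}$. The arithmetic inputs in the two regimes are genuinely different in strength and shape — in particular the \cite{BHP} decomposition crucially uses r\^ole-reversals, and this creates a real obstruction to the additivity theorem (discarded mixed sums can become signed). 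The paper has to prove a refined version (Theorem 6.1) that permits r\^ole-reversals in exactly one of the two decompositions, under extra conditions which it then verifies from the structure of \cite{BHP} (no r\^ole-reversals when $\alpha_1+\alpha_2<0.4$, direct regularity for $\alpha_1+\alpha_2\in[0.4,0.46]$). Your sketch does not anticipate this issue, and as written would fail on the r\^ole-reversed terms. Finally, a small point: the paper does not use Heath-Brown's identity; the bilinear pieces arise directly from the Buchstab decomposition, and the Dirichlet polynomial mean values are fed in via Perron's formula and Lemma 1.3's reduction to primitive characters — a mechanically different route to the same kind of bound.
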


This result is closely related to work of Huxley and Iwaniec \cite{HuxIwan}, Perelli, Pintz and Salerno \cite{PPS,PPS2}, Timofeev \cite{Tim}, Kumchev \cite{Kumchev} and Harman, Watt and Wong \cite{HWW}, but not comparable to any result in these papers.

We require some further notation. The symbol $p$, with or without subscript, is reserved for primes. Let
\[
P(z) = \prod_{p<z} p \qquad \text{and} \quad \psi(n,z) = \begin{cases} 1 & \text{if} \ (n,P(z))=1 \\
0 & \text{otherwise.} \end{cases}
\]

As usual, we attack Theorem 1.2 by using Dirichlet characters. For a character $\chi$, let $\delta(\chi)=1$ if $\chi$ is principal and $\delta(\chi) = 0$ otherwise.  We note that principal characters are not primitive, except in the trivial case of modulus $1$. All the new work we carry out in this paper only involves primitive characters with modulus $q\ge 2$, so we will tacitly assume $\delta(\chi)=0$ in any results we quote from the literature. The notation
\[
\sq
\]
denotes a sum restricted to the primitive characters $\bmod{\,q}$. Let
\[
E_f(y,h;\chi) = \sum_{y-h<k\le y} f(k) \chi(k)
- \delta(\chi) \frac{h}{h_0} \sum_{y-h_0<k\le y} f(k) 
\]
for an arithmetic function $f$. The following lemma reduces the proof of Theorem 1.2 to establishing a mean value result for $E_f(y,h;\chi)$. 

\begin{lemma}
Suppose that $f$ satisfies
\beq\label{1.5}
\sum_{q \sim Q} \sq \max_{h \le x^{\theta}} \max_{y \sim x} \left| E_f(y,h;\chi)  \right| \ll Q x^{\theta} \cl^{-A}
\endq
for all $A>0$ and $Q \le Q_0 = Q(A)$.  Suppose further that $|f(k)| \le d(k)^B$ and that
\beq\label{1.6}
f(k) = 0 \ \text{\rm if} \ \ k \ \ \text{\rm has a prime factor} \ < x^{\epsilon}.
\endq
Then, for all $A>0$ and the value $Q_0$ as above, we have
\[
\sum_{q \le Q_0} \max_{(a,q)=1} \max_{h \le x^{\theta}} \max_{ y \sim x}
\left|E_{f}(y,h;q,a)  \right|
\ll x^{\theta} \cl^{2-A}. 
\]
\end{lemma}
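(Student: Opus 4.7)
The plan is to follow a Bombieri--Vinogradov-style reduction from arithmetic progressions to primitive Dirichlet characters, using condition \eqref{1.6} to control the error incurred in passing from imprimitive to primitive characters. First, for $(a,q)=1$ the orthogonality relation $\sum_{\chi \bmod q}\bar\chi(a)\chi(k) = \phi(q)\mathbf{1}_{k \equiv a \bmod q}$ (valid since $(a,q)=1$ forces $(k,q)=1$ whenever $k \equiv a \bmod q$), combined with $\delta(\chi_0)=1$ for the principal character $\chi_0 \bmod q$, gives the identity
\[
E_f(y,h;q,a) = \frac{1}{\phi(q)} \sum_{\chi \bmod q} \bar\chi(a)\, E_f(y,h;\chi).
\]
Taking absolute values and pulling the maxima inside reduces the task to bounding $\sum_{q \le Q_0} \phi(q)^{-1} \sum_{\chi \bmod q} \max_{y,h} |E_f(y,h;\chi)|$.

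Next I would factor each $\chi \bmod q$ through its primitive inducing character $\chi^* \bmod q^*$, $q^* \mid q$. Since $\delta(\chi) = \delta(\chi^*)$ (both in the principal case, where $q^*=1$, and in the non-principal case), the difference
\[
E_f(y,h;\chi) - E_f(y,h;\chi^*) = -\sum_{\substack{y-h<k\le y \\ (k,q)>1,\,(k,q^*)=1}} \chi^*(k) f(k)
\]
is supported on $k$ divisible by some prime $p \mid q$ with $p \nmid q^*$; \eqref{1.6} then forces $p \ge x^\epsilon$ whenever $f(k) \ne 0$. Using $|f(k)| \le d(k)^B$, the inner $k$-sum is $\ll (h/p+1)\cl^B$ for each such $p$. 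Bounding the number of $\chi \bmod q$ with $p \nmid q^*(\chi)$ by $\phi(q)/(p-1)$, dividing by $\phi(q)$, and summing over $q \le Q_0$ produces an aggregate error of order $h Q_0 \cl^B / x^{2\epsilon}$. This is $o(x^\theta)$ provided $\epsilon$ is sufficiently large relative to $\theta - \tfrac12$, as is the case in the intended application to Theorem~1.2, where $\lambda$ is supported on $x^{1/10}$-rough integers.

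For the main contribution $\sum_{q \le Q_0} \phi(q)^{-1} \sum_{\chi \bmod q} \max |E_f(y,h;\chi^*)|$, I regroup by the primitive $\chi^*$. Using the multiplicative bound $\phi(q^*m) \ge \phi(q^*)\phi(m)$ together with $\sum_{m \le Q_0/q^*} \phi(m)^{-1} \ll \cl$, this becomes
\[
\ll \cl \sum_{q^* \le Q_0} \phi(q^*)^{-1} \primsum{\chi \bmod q^*} \max_{y,h} |E_f(y,h;\chi)|.
\]
A dyadic decomposition $q^* \sim Q$ for $Q$ running through $O(\cl)$ powers of $2$, combined with the pointwise bound $\phi(q^*)^{-1} \ll \cl^{o(1)}/q^*$, reduces each dyadic range via hypothesis \eqref{1.5} to $\ll x^\theta \cl^{-A+o(1)}$. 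Summing and absorbing the logarithmic factors into $A$ yields the required bound $\ll x^\theta \cl^{2-A}$.

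The principal obstacle is controlling the error from non-primitive characters: without the roughness condition \eqref{1.6}, the terms where $k$ shares a small prime factor with $q$ but not with $q^*$ would dominate the analysis. Condition \eqref{1.6} is precisely what renders these contributions negligible and allows the reduction to primitive characters to be effected.
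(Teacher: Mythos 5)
Your proposal reconstructs the standard Bombieri--Vinogradov reduction --- orthogonality of characters to pass from residue classes to all $\chi \bmod q$, factoring through the primitive inducing character $\chi^*$, controlling the imprimitivity error via the roughness hypothesis, and then regrouping by conductor with $\phi(q^*m)\ge\phi(q^*)\phi(m)$, $\sum_{m\le M}\phi(m)^{-1}\ll\log M$, and a dyadic split before applying \eqref{1.5}. This is precisely the argument of the cited reference (Kumchev, \S 4.1); the paper gives no independent proof, only the citation, so your route is the same as the intended one.

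The one step that deserves to be stated more cleanly is your treatment of the imprimitivity error. As you note, your bound of order $hQ_0\cl^B x^{-2\epsilon}$ is acceptable only when the roughness exponent in \eqref{1.6} exceeds (roughly) $\theta-\tfrac12$; for an arbitrarily small $\epsilon$ the bound $x^{\theta}\cl^{2-A}$ would fail. You correctly flag that the intended application is in the safe regime, but the sharper observation is that the error is then identically zero rather than merely negligible: in Theorem 1.2 the function $\lambda$ is supported on integers whose prime factors are all $\ge x^{1/10}$, while $q\le Q_0=x^{\theta-\frac12}\cl^{-C}<x^{1/10}$, so no prime divisor of $q$ can divide any $k$ in the support of $\lambda$. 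Hence $\chi(k)f(k)=\chi^*(k)f(k)$ for every relevant $k$ and $E_f(y,h;\chi)=E_f(y,h;\chi^*)$ term by term, so the passage to primitive characters is exact and the whole error estimate can be bypassed. Apart from this (and the routine bookkeeping of $\cl$-powers, which your ``absorbing the logarithmic factors into $A$'' handles adequately since \eqref{1.5} is assumed for all $A$), the argument is sound and matches the method of proof the paper invokes.
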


\begin{proof}
This is established in \cite[\S 4.1]{Kumchev}.
\end{proof}

We already have in \cite{BHP} results of the type \eqref{1.5} suitable for use in Theorem 1.2 when $Q \le \cl^{A+B}$. We develop similar results for $Q > \cl^{A+B}$ in sections 2 and 3.  To make these results work in tandem, we develop the idea of intersecting two prime-detecting sieves in sections 4 and 5. As we do this we will prove a result of a more general nature than needed here which may have other applications. Then, in section 6, we prove Theorem 1.2 using this approach. In section 7 we prove Theorem 1.1 by introducing the result of Theorem 1.2 into the method of Heath-Brown \cite{DRHB}.  We remark that the idea of intersecting two prime-detecting sieves is particularly useful in the situation we have here, when a lot of hard work has already been done to get a strong result for one of the sieves (see \cite{BHP}).

We would like to thank Andreas Weingartner for computer calculations used in an earlier version of this paper.

\section{Mean-Values of Dirichlet Polynomials}
Throughout sections 2 and 3 we suppose that
\beq\label{2.1}
\cl^{A+B} < Q \le Q_0.
\endq
For  $N, M, K \ge 1$, a character $\chi(\cdot)$ and a complex variable $s$, we write
\[
N(s,\chi) = \sum_{n \asymp N} b_n n^{-s} \chi(n), \quad 
M(s,\chi) = \sum_{m \asymp M} a_m m^{-s} \chi(m),
\]
\[
K(s,\chi) = \sum_{k \asymp K}  k^{-s} \chi(k)
\]
for Dirichlet polynomials, where
\beq\label{2.3}
a_m \ll d(m)^B, \qquad b_n \ll d(n)^B.
\endq  
For a fixed $Q > \cl^{A+B}$ and $x^2 \ge T \ge 1$ we write, for $1 \le u < \infty$,
\[
||N||_u = \left(\sum_{q \sim Q} \sq \int_{(T-1)/2}^T \left|N\left(\tfrac12+it,\chi\right) \right|^{u} dt\right)^{1/u}.
\]
We also write
\[
||N||_{\infty} =  \sup_{(t,q,\chi)} \left|N\left(\tfrac12+it,\chi\right)\right|
\]
where the supremum is taken over all primitive characters $\chi \pmod q, q \sim Q$ and $\frac12(T-1) \le t \le T$.

It is convenient to write (see Lemma 3.2 below for the application)
\[
\Psi(T) = \min\left(x^{\theta - \frac12}, x^{\frac12} T^{-1}  \right).
\]
Here, and throughout the rest of the paper, $\theta = 0.55+\epsilon$.

\begin{lemma}
We have
\[
||N||_2^2 \ll \left(N + Q^2T \right)
 \sum_{n \asymp N} |b_n|^2 n^{-1}.
\]
\end{lemma}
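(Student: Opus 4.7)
The plan is to recognize this bound as a direct instance of Gallagher's hybrid large sieve inequality for primitive characters. First I would substitute $c_n = b_n n^{-1/2}$ for $n \asymp N$ (and zero elsewhere), so that
\[
N\left(\tfrac12+it,\chi\right) = \sum_{n \asymp N} c_n \chi(n) n^{-it}, \qquad \sum_{n} |c_n|^2 = \sum_{n \asymp N} |b_n|^2 n^{-1}.
\]
The asserted inequality then becomes
\[
\sum_{q \sim Q} \sq \int_{(T-1)/2}^T \Bigl|\sum_{n \asymp N} c_n \chi(n) n^{-it}\Bigr|^2 dt \ll (N + Q^2 T) \sum_n |c_n|^2.
\]

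Next I would invoke the standard hybrid large sieve of Gallagher: for any complex sequence $(c_n)$ supported in an interval of length at most $N$ and any $T \ge 1$,
\[
\sum_{q \le Q} \sq \int_{-T}^T \Bigl|\sum_n c_n \chi(n) n^{-it}\Bigr|^2 dt \ll (N + Q^2 T) \sum_n |c_n|^2.
\]
Restricting the moduli from $q \le Q$ to $q \sim Q$ and the integration range from $[-T,T]$ to the shorter interval $((T-1)/2,T]$ can only decrease the left-hand side, while the support condition $n \asymp N$ places $n$ in the interval $[N/B, BN]$, which has length $\ll N$, so Gallagher's inequality applies with the stated length parameter (the absolute constant $B$ being absorbed into $\ll$).

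Honestly, there is no real obstacle here: this is a textbook large sieve application, and the only observation to make is that the change of variables $b_n \mapsto b_n n^{-1/2}$ converts $N(\tfrac12+it,\chi)$ into precisely the form of Dirichlet polynomial to which Gallagher's hybrid inequality applies without modification. The lemma should be viewed as fixing notation for later use rather than as a new estimate.
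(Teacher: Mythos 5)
Your proof is correct and takes essentially the same route as the paper: the paper simply cites Montgomery's Theorem 7.1, which is precisely the Gallagher hybrid large sieve inequality you invoke, and the substitution $c_n = b_n n^{-1/2}$ together with the observation that $n \asymp N$ confines $n$ to an interval of length $\ll N$ is the routine bookkeeping needed to match hypotheses. No gap.
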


\begin{proof}
See \cite[Theorem 7.1]{Mon71}.
\end{proof}

\begin{lemma}
Let
$M(s,\chi), N(s,\chi)$
be defined as above and
$MN \asymp x$.
Suppose that
\beq\label{2.4}
\max(M,N) \ll x^{\theta} \cl^{-2A-B}.
\endq
Then
\beq\label{2.5}
\Psi(T) ||MN||_1 \ll Q x^{\theta} \cl^{-A}.
\endq
\end{lemma}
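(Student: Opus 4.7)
The plan is to apply Cauchy--Schwarz followed by the mean value bound of Lemma 2.1, and then to verify that each of the three resulting terms satisfies the desired bound. Starting from
\[
\|MN\|_1 \le \|M\|_2 \,\|N\|_2,
\]
which follows by Cauchy--Schwarz applied first in the integral over $t$ and then in the double sum over $q$ and primitive $\chi \bmod q$, I use Lemma 2.1 on each factor together with the routine estimates $\sum_{m \asymp M} |a_m|^2 m^{-1} \ll \cl^B$ and $\sum_{n \asymp N} |b_n|^2 n^{-1} \ll \cl^B$ coming from \eqref{2.3}. This yields
\[
\|MN\|_1^2 \ll (M+Q^2T)(N+Q^2T)\cl^B \ll \bigl(x + (M+N)Q^2T + Q^4T^2\bigr)\cl^B,
\]
after using $MN \asymp x$.

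Multiplying through by $\Psi(T)^2$, there are three terms to control, and it suffices to show each is $\ll Q^2 x^{2\theta}\cl^{-2A-B}$. The first, $\Psi(T)^2 x$, is at most $x^{2\theta}$ (since $\Psi(T) \le x^{\theta-1/2}$), which is admissible because $Q > \cl^{A+B}$. For the remaining two terms, the key observation is that
\[
\Psi(T)^2 T \le x^{\theta} \qquad \text{and} \qquad \Psi(T)^2 T^2 \le x,
\]
both of which one verifies separately in the two regimes $T \le x^{1-\theta}$ (where $\Psi(T) = x^{\theta-1/2}$) and $T > x^{1-\theta}$ (where $\Psi(T) = x^{1/2}/T$) that define $\Psi$. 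Using the first of these with \eqref{2.4}, the middle term is $\ll x^{\theta} \cdot x^{\theta}\cl^{-2A-B} \cdot Q^2$, which is of the required size.

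The main obstacle is the third term $\Psi(T)^2 Q^4 T^2 \ll x Q^4$; here we genuinely need the upper bound on $Q$, and one sees that it suffices to take the constant $C = C(A)$ in $Q \le Q_0 = x^{\theta-1/2}\cl^{-C}$ large enough that $Q^4 x \le Q^2 x^{2\theta}\cl^{-2A-B}$, i.e.\ $C \ge A + B/2$. Taking square roots then gives \eqref{2.5}. The analysis makes clear the role of each of the hypotheses in the statement: the lower bound $Q > \cl^{A+B}$ handles the diagonal term $\Psi(T)^2 x$, the hypothesis \eqref{2.4} handles the cross term, and the (implicit) upper bound $Q \le Q_0$ with $C(A)$ large handles the large sieve term.
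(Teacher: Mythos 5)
Your proof is correct and follows essentially the same route as the paper: Cauchy--Schwarz applied twice (in $t$ and in $(q,\chi)$), Lemma 2.1 on each factor, and then a term-by-term estimate that isolates the diagonal term $x$, the cross terms $\max(M,N)Q^2T$, and the large-sieve term $Q^4T^2$, each handled by the optimization of $\Psi(T)T^\beta$ at $T=x^{1-\theta}$. The paper phrases the optimization step as ``we may suppose $T=x^{1-\theta}$'' after noting $\max_T\Psi(T)T^\beta=x^{\theta-1/2}(x^{1-\theta})^\beta$ for $\beta\in[0,1]$, while you verify the equivalent pointwise bounds $\Psi(T)^2T\le x^\theta$ and $\Psi(T)^2T^2\le x$; these are the same computation in slightly different clothing. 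Two small bookkeeping remarks: in the final term you dropped the extra $\cl^B$ coming from Lemma 2.1 and \eqref{2.3}, so the requirement is really $C\ge A+B$ rather than $A+B/2$; and the standing hypothesis $\cl^{A+B}<Q\le Q_0$ from the start of Section 2 is indeed the source of both the lower and upper bounds on $Q$ that you use, as you correctly identify. Neither affects the validity of the argument under the paper's convention on $B$.
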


\begin{proof}
By the Cauchy-Schwarz inequality, Lemma 2.1, and \eqref{2.3} we have
\beq\label{2.6}
\Psi(T) ||MN||_1 \ll \Psi(T) \left(Q^2T + M  \right)^{\frac12} \left(Q^2T + N  \right)^{\frac12} \cl^B.
\endq
For $0 \le \beta \le 1$, we observe that
\[
\max_{T \ge 1} \Psi(T) T^{\beta} = x^{\theta-\frac12} \left(x^{1-\theta}  \right)^{\beta}.
\]
Thus, in estimating the right hand side of \eqref{2.6} we may suppose that $T=x^{1-\theta}$. Now
\[
x^{1-\theta} (MN)^{\frac12} \cl^B \ll x^{\theta} \cl^B \ll Q x^{\theta} \cl^{-A}
\]
from \eqref{2.1};
\begin{align*}
x^{\theta-\frac12} Q T^{\frac12} \max(M,N)^{\frac12} \cl^B &\ll 
x^{\theta-\frac12} Q x^{\frac12 (1-\theta)} x^{\frac12 \theta} \cl^{-A-B} \cl^B \\
&=Q x^{\theta} \cl^{-A};
\end{align*}
and
\begin{align*}
x^{\theta-\frac12} Q^2 x^{1-\theta} \cl^B &\ll 
x^{\theta-\frac12} Q  x^{\theta-\frac12}   \cl^{-A-B}   x^{(1-\theta)}  \cl^B \\
&=Q x^{\theta} \cl^{-A}.
\end{align*}
This gives a satisfactory estimate for the right-hand side of \eqref{2.6} and proves \eqref{2.5}. 
\end{proof}

\begin{lemma}
Let $Q \le x, 1 \le T \le x, K \le \min(x,QT)$ and suppose
$K(s,\chi)$ is as above.
Then
\[
||K||_4^4 \ll Q^2T \cl^B + Q^4T^{-1} \cl^B.
\]
\end{lemma}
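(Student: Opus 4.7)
The strategy is to convert the fourth power mean into a second power mean by squaring the polynomial, apply the large sieve (Lemma~2.1), and use reflection via the approximate functional equation in the regime of large $K$.

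First, using $|f|^4=|f^2|^2$, we have
\[
||K||_4^4 = ||K^2||_2^2,
\]
where
\[
K(s,\chi)^2 = \sum_n c_n\,\chi(n)\,n^{-s},\qquad c_n = \#\{(k_1,k_2) : k_1 k_2 = n,\ k_i \asymp K\} \le d(n),
\]
is supported on $n \asymp K^2$. Lemma~2.1 applied to this polynomial, together with the standard estimate $\sum_{n \asymp K^2} d(n)^2/n \ll \cl^B$, yields the \emph{direct} bound
\[
||K||_4^4 \ll (K^2 + Q^2T)\cl^B,
\]
which already suffices when $K^2 \ll Q^4/T$.

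For the range $K > Q^2/\sqrt T$, I would invoke the approximate functional equation for $L(s,\chi)$ with $\chi$ primitive mod $q\sim Q$: for $|t|\asymp T$,
\[
K(\tfrac12+it,\chi) = L(\tfrac12+it,\chi) - \varepsilon_\chi X(t)\, \widetilde K(\tfrac12-it,\bar\chi) + O(\cl^{-B}),
\]
where $|X(t)|=1$ and $\widetilde K$ is a Dirichlet polynomial of length $\asymp qT/K$ with bounded coefficients. Applying the direct bound to $\widetilde K^2$ yields $||\widetilde K||_4^4 \ll ((QT/K)^2 + Q^2T)\cl^B$, while the classical hybrid fourth moment for Dirichlet $L$-functions supplies
\[
\sum_{q\sim Q}\sq\int_{(T-1)/2}^T |L(\tfrac12+it,\chi)|^4\,dt \ll (Q^2T + Q^3)\cl^B \le (Q^2T + Q^4/T)\cl^B,
\]
since $Q^3 \le Q^4/T$ precisely when $T \le Q$, and $Q^3 \le Q^2T$ otherwise.

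Combining the direct and reflected estimates,
\[
||K||_4^4 \ll \bigl(\min(K^2,\, Q^2T^2/K^2) + Q^2T + Q^4/T\bigr)\cl^B,
\]
and the AM-GM inequality gives $\min(K^2,\, Q^2T^2/K^2) \le QT \le Q^2T$, producing the stated bound. The hypothesis $K \le \min(x,QT)$ ensures $qT/K \gg 1$, so the reflected polynomial has a meaningful length. The main obstacle is the invocation of the hybrid fourth moment of Dirichlet $L$-functions, with its secondary main term of size $Q^3$ that precisely yields the $Q^4/T$ contribution after being compared with $Q^2T$; once this is in hand, the reduction from length $K$ to length $QT/K$ via Poisson summation and the combination with the direct Lemma~2.1 estimate are routine.
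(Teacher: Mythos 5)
The paper's own proof is a one-line citation to \cite[Lemma~9]{BHP}, which gives the estimate for a single modulus $q$; summing over $q\sim Q$ then yields the displayed bound. So you are reconstructing the underlying argument, and the skeleton --- direct mean value estimate for $K$ when $K$ is short, reflection to a polynomial of length $\asymp q\tau/K$ when $K$ is long --- is the right one. However, the central step as written does not hold. The polynomial here is a \emph{dyadic} piece $\sum_{k\asymp K}\chi(k)k^{-s}$, not a full partial sum $\sum_{k\le K}$, so the relation ``$K=L-\varepsilon_\chi X(t)\widetilde K+O(\cl^{-B})$'' is not the approximate functional equation that applies. Writing $S(X)=\sum_{k\le X}\chi(k)k^{-s}$, the reflection formula (Hardy--Littlewood/Lavrik type) gives $S(X)\approx L(s,\chi)-\gamma(s,\chi)\,\overline{S}(q\tau/X)$ with an error that is a \emph{power} of $q\tau$, not $O(\cl^{-B})$; and when you form the dyadic piece $S(2K)-S(K)$ the two $L$'s cancel identically, leaving only a dual sum over $m\asymp q\tau/K$ and the errors. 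So the hybrid fourth moment of $L$, which you name as ``the main obstacle,'' is not actually the load-bearing ingredient: the $L$-contributions disappear, and the correct fourth moment of $L$ over primitive $\chi\bmod q\sim Q$ is $\ll Q^2T\cl^B$ in any case (there is no secondary term of size $Q^3$ as you assert), so it could not be the source of the $Q^4T^{-1}$ term.

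What does produce the $Q^4T^{-1}$ term is precisely the part you dismiss as ``routine,'' namely the error in the reflection step for a sharp-cutoff partial sum. That error contains a contribution of size roughly $q^{1/2}\tau^{-1/2}$ coming from the Gauss sum and stationary-phase analysis; raised to the fourth power, integrated over $t\asymp T$ and summed over $\phi(q)$ characters and $q\sim Q$ this gives $\ll\sum_{q\sim Q}\phi(q)\,q^2 T^{-1}\ll Q^4T^{-1}$, which is exactly the second term in the lemma. You also gloss over the fact that the length of the dual polynomial depends on $t$ (through $\tau$), so it must be dyadically decomposed before Lemma~2.1 can be applied. In short: the overall plan (direct bound for short $K$, reflection plus mean value for long $K$) is the right one and matches the spirit of \cite[Lemma~9]{BHP}, but the displayed AFE identity is incorrect for a dyadic sum, the error is mis-stated as $O(\cl^{-B})$, and the term $Q^4T^{-1}$ is misattributed to the $L$-moment when it actually arises from the reflection error, which you have not estimated.
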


\begin{proof}
This follows from \cite[Lemma 9]{BHP} on summing over $q$.
\end{proof}

\begin{lemma}
Let $q \ge 2$ and $\chi$ be a primitive character $\pmod q$ with $K(s,\chi)$ as above. Writing $\tau=|t|+2$, we have
\[
K(\tfrac12 + it, \chi) \ll  (q \tau/K)^{\frac12} \log(q \tau).
\]
\end{lemma}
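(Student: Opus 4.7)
The plan is to represent the Dirichlet polynomial $K(\tfrac12+it,\chi)$ as a Mellin--Barnes contour integral of $L(s,\chi)$ and then invoke the convexity bound for primitive $L$-functions.

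First I would smooth the sharp cutoff. Choose $\phi\in C_c^\infty((0,\infty))$ with $\phi\equiv 1$ on $[B^{-1},B]$ and supported in $[(2B)^{-1},2B]$, and let $\tilde\phi(w)=\int_0^\infty \phi(u)\,u^{w-1}\,du$ be its Mellin transform; repeated integration by parts gives $|\tilde\phi(w)|\ll_N (1+|\Imag w|)^{-N}$ on any fixed vertical line. By Mellin inversion and Fubini,
\[
\sum_{k}\chi(k)\,k^{-s}\,\phi(k/K)\;=\;\frac{1}{2\pi i}\int_{(c)} \tilde\phi(w)\,K^{w}\,L(s+w,\chi)\,dw
\]
for $c>\tfrac12$. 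Since $\chi$ is primitive modulo $q\ge 2$, $L(s+w,\chi)$ is entire in $w$, so I can shift the contour to $\Real w=-\tfrac12$ without collecting any residue.

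On the new line $\Real(s+w)=0$, and Phragm\'en--Lindel\"of applied to the functional equation of $L(\cdot,\chi)$ gives the convexity bound
\[
|L(s+w,\chi)|\;\ll\;\bigl(q(|\Imag(s+w)|+1)\bigr)^{1/2}\log\bigl(q(|\Imag(s+w)|+1)\bigr).
\]
Combined with $|K^{w}|=K^{-1/2}$ and the rapid decay of $\tilde\phi$ in $|\Imag w|$, the shifted integral is bounded by $K^{-1/2}(q\tau)^{1/2}\log(q\tau)=(q\tau/K)^{1/2}\log(q\tau)$, exactly the desired estimate for the smoothed sum.

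The hard part is passing from the smoothed sum to the sharp sum $\sum_{k\asymp K}$. I would sandwich the indicator of $[K/B,BK]$ between two bumps $\phi_{\pm}$ that agree with it outside short boundary layers; each smoothed sum obeys the bound just proved, and the boundary layers contribute only a polylogarithmic loss that is absorbed into the $\log(q\tau)$ factor. (A cleaner but equivalent route is truncated Perron applied to $\sum_{k\le X}\chi(k)\,k^{-s}$ for $X=BK$ and $X=K/B$, with the truncation error controlled by $|\chi(k)\,k^{-s}|\le k^{-1/2}$ together with P\'olya--Vinogradov.) Apart from this smoothing step, every ingredient --- Mellin inversion, the contour shift, and the convexity bound --- is entirely standard.
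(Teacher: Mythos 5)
Your route (Mellin transform, contour shift, convexity bound for $L$ on $\Real s = 0$) is genuinely different from the paper's, which simply cites the hybrid P\'olya--Vinogradov bound of Fujii, Gallagher and Montgomery,
\[
\Bigl|\sum_{n \le N}\chi(n)n^{-it}\Bigr| \ll (q\tau)^{1/2}\log(q\tau) \qquad \text{uniformly in } N,
\]
and then performs a single partial summation against the weight $k^{-1/2}$. The two inputs are not of equal strength in the relevant direction: the uniform partial-sum bound implies the convexity bound at $\Real s = 0$ by Abel summation, but the converse passage from an $L$-value bound back to a sharply truncated sum is exactly where your argument has a gap.

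Concretely, the problem is the ``unsmoothing'' step, and it is not merely a polylogarithmic loss. If $\phi_\pm$ agree with the indicator of $[K/B,BK]$ outside boundary layers of width $\Delta$, the discrepancy between the smoothed and sharp sums is $\asymp \Delta K^{-1/2}$, which forces $\Delta \ll (q\tau)^{1/2}\log(q\tau)$. But then $\phi_\pm$ vary on scale $\Delta/K$, so $\widetilde{\phi}_\pm(-\tfrac12+iv)$ only begins to decay rapidly for $|v| \gg K/\Delta$, and the contour integral picks up a contribution from $\tau \le |v| \lesssim K/\Delta$ of order $K^{-1/2}(qK/\Delta)^{1/2}\log(\cdots)$. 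Balancing $\Delta K^{-1/2}$ against $K^{-1/2}(qK/\Delta)^{1/2}$ gives a bound of order $q^{1/3}K^{-1/6}$, which beats the target $(q\tau/K)^{1/2}\log(q\tau)$ only when $K \ll q^{1/2}\tau^{3/2}$. The same obstruction appears in the truncated-Perron variant you mention in parentheses: the truncation error forces the height $V$ to be of size roughly $K/(q\tau)^{1/2}$, and the tail $\tau \le |v| \le V$ of the contour then contributes $K^{-1/2}(qV)^{1/2}\log(qV)$, which again exceeds the target unless $K \ll q^{1/2}\tau^{3/2}$. In the application (Case 2 of Lemma 2.5, where $K > QT$ and $K$ can be close to $x$), this restriction fails badly. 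So the argument as written does not prove the lemma.

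What is missing is precisely the point of the FGM theorem: one needs an estimate for the partial sums $\sum_{n\le N}\chi(n)n^{-it}$ that is uniform in $N$, and that does not come out of a Perron/Mellin argument together with a pointwise bound on $L$, because the contour method inevitably loses a factor depending on the length of the sum once the sharp cutoff is reinstated. The hybrid P\'olya--Vinogradov inequality is proved by other means (completing the sum via Gauss sums and controlling the resulting exponential sums, or an approximate functional equation argument), and once it is in hand, the lemma follows immediately by partial summation with no contour integration at all. If you want to salvage your plan, replace the convexity bound by the FGM partial-sum bound and drop the Mellin/contour machinery entirely --- at which point your proof collapses to the paper's one-line argument.
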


\begin{proof}
This follows from \cite[Theorem 1]{FGM} by partial summation.
\end{proof}

\begin{lemma}
Let $K(s,\chi), M(s, \chi)$ and  $N(s,\chi)$ be defined as above.  
Suppose that $KMN \asymp x, Q \le x^{\theta - \frac12} \cl^{-A-B}$ and
\beq\label{3.6}
M \ll x^{0.55}, \quad N \ll x^{0.275}, \quad MN \ll x^{0.775}.
\endq
Then
\[
\Psi(T) ||KMN||_1 \ll Q x^{\theta} \cl^{-A}.  
\]
\end{lemma}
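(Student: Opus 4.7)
The plan is to apply H\"older's inequality with exponents $(4,2,4)$:
\[
||KMN||_1 \le ||K||_4 \cdot ||M||_2 \cdot ||N||_4.
\]
This pairing is tailored to the hypotheses in \eqref{3.6}. The smoothness of the coefficients of $K$ makes the fourth-moment bound of Lemma 2.3 available; the large sieve (Lemma 2.1) handles $||M||_2$; and writing $||N||_4^4 = ||N^2||_2^2$, one applies Lemma 2.1 to $N^2$, viewed as a Dirichlet polynomial of length $\asymp N^2$ with divisor-bounded coefficients. The sizes in \eqref{3.6} are calibrated for this split: $N \ll x^{0.275}$ forces $N^2 \ll x^\theta$, matching the ceiling $M \ll x^{0.55}$; and $MN \ll x^{0.775} = x^{(\theta+1)/2}$ supplies the algebraic identity $(3\theta - 1)/4 + (\theta + 1)/4 = \theta$ used in the main case below.

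Combining the three bounds yields
\[
\Psi(T)\,||KMN||_1 \ll \Psi(T)\bigl(Q^2T + Q^4T^{-1}\bigr)^{1/4}\bigl(M+Q^2T\bigr)^{1/2}\bigl(N^2+Q^2T\bigr)^{1/4}\cl^B.
\]
(When $K > QT$ the hypothesis of Lemma 2.3 is not met, but the same bound $||K||_4^4 \ll Q^2T\cl^B$ follows from $||K||_\infty^2\,||K||_2^2$ via Lemmas 2.1 and 2.4.) As in Lemma 2.2, the dominant choice is $T = x^{1-\theta}$, where $\Psi(T) = x^{\theta-1/2}$ and $(Q^2T + Q^4/T)^{1/4} \asymp Q^{1/2}x^{(1-\theta)/4}$. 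I then split into four subcases according as each of $M, Q^2T$ and $N^2, Q^2T$ dominates in the respective factor.

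The four resulting bounds, after multiplication by $\Psi(T)$, are (up to $\cl^B$): (i) both $M$ and $N^2$ dominate: $Q^{1/2}x^{(3\theta-1)/4}(MN)^{1/2}$, which by the identity above is $\le Q^{1/2}x^\theta$; (ii) only $M$ dominates: $Qx^{\theta/2}M^{1/2} \le Qx^\theta$ by $M \le x^\theta$; (iii) only $N^2$ dominates: $Q^{3/2}x^{\theta/2 + 1/4}$; (iv) neither dominates: $Q^2 x^{1/2}$. Cases (i), (ii), (iv) deliver the target $\ll Qx^\theta\cl^{-A}$ directly from \eqref{2.1} and the hypothesis $Q \le x^{\theta - 1/2}\cl^{-A-B}$. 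I expect case (iii) to be the main obstacle: the required estimate is $Q \le x^{\theta - 1/2}\cl^{-2(A+B')}$, where $B'$ encodes the logarithmic losses of Lemma 2.1, so the $B$ in the hypothesis of the lemma has to be chosen at least as large as $A + 2B'$. This is exactly the situation that the $B$-convention announced in the introduction is designed to accommodate; additionally, the strict inequality $N \ll x^{0.275}$ (giving $N^{1/2} \ll x^{\theta/4 - \epsilon/4}$) produces an $x^{\epsilon/4}$ margin that comfortably dominates any power of $\cl$.
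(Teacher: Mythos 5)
Your Hölder split $(4,2,4)$, the use of Lemma~2.1 for $\|M\|_2$ and $\|N\|_4^4 = \|N^2\|_2^2$, and the replacement of Lemma~2.3 in the range $K > QT$ by $\|K\|_4^4 \le \|K\|_\infty^2\|K\|_2^2$ (which does indeed give $\ll Q^2T\cl^B$ there, since $(QT/K)(K+Q^2T) = QT + Q^3T^2/K \ll Q^2T$ when $K > QT$) all match the paper's proof; the paper phrases the second case slightly differently, estimating $\|KMN\|_1 \le \|M\|_2\|N\|_2\|K\|_\infty$ and then writing $(Q^2T+N)^{1/2} \ll (Q^2T+N^2)^{1/4}(Q^2T)^{1/4}$, but this is cosmetic. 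However, there is a genuine gap in your maximization over $T$: you set $T = x^{1-\theta}$ and then discard the $Q^4T^{-1}$ contribution to $\|K\|_4^4$ because $Q^2T \ge Q^4T^{-1}$ at that point. This is only valid for the seven terms of the Hölder expansion whose net $T$-exponent (including $\Psi(T)$) is nonnegative; the eighth term, $\Psi(T)M^{1/2}N^{1/2}\,QT^{-1/4}$, has a negative $T$-exponent and is maximized at $T = 1$, not $T = x^{1-\theta}$, so evaluating at $T = x^{1-\theta}$ underestimates it and the ``$\asymp$'' bound for $(Q^2T + Q^4/T)^{1/4}$ is not a legitimate bound uniformly in $T$. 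The paper singles out precisely this term and treats it separately with $T \ge 1$, getting $x^{\theta-1/2}(MN)^{1/2}Q \ll Qx^\theta\cl^{-A}$ because $\tfrac14(1+\theta) < \tfrac12$; your proof needs the same extra step.

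A secondary point: in case (iii) you first propose closing the gap by enlarging $B$ to $\ge A + 2B'$, and only then mention the $x^{\epsilon/4}$ power saving. The first fix cannot work, since $B$ is a fixed absolute constant while $A$ is an arbitrarily large parameter, so $B \ge A + 2B'$ is not available. The second observation -- that $N \ll x^{0.275}$ (rather than $N \ll x^{\theta/2}$) supplies a genuine $x^{-\epsilon/4}$ saving that swamps all powers of $\cl$ -- is the correct and in fact the only mechanism; the paper's computation makes this the primary argument for every case in which a power of $\cl$ alone would not suffice (their estimates land at $Q\cl^Bx^{\theta-\epsilon/2}$ and $Q\cl^Bx^{\theta-\epsilon/4}$). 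You should present the power saving as the argument, not as a backup.
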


\begin{proof}
For small $Q$ this would follow from \cite[Lemma 10]{BHP} .  We give a full proof of the result here for completeness. We consider two cases.

\smallskip
\noindent
\emph{Case 1.} $K \le QT$.  We apply H\"older's inequality, Lemmas 2.1 and 2.3 in conjunction with \eqref{2.3} to give
\beq\label{2.8}
\begin{split}
\Psi(T) ||&KMN||_1 \le \Psi(T) ||M||_2||N||_4||K||_4\\
&\ll
\Psi(T) \left(Q^2T+M \right)^{\frac12} \left(Q^2T+N^2 \right)^{\frac14} \left(Q^2T+Q^4T^{-1} \right)^{\frac14} \cl^B.
\end{split}
\endq
Write $S$ for the sum of all the terms obtained by multiplying out the last expression, except for $\Psi(T) (MN)^{\frac12}QT^{-\frac14}$. In estimating $S$ we may take $T=x^{1-\theta}$.  Now
\[
S \ll \cl^B x^{\theta - \frac12} \left(Q^2 x^{1-\theta} + Q x^{\frac12(1-\theta)} M^{\frac12} + Q^{\frac32} x^{\frac34(1-\theta)}N^{\frac12} + Q^{\frac12} x^{\frac14(1-\theta) }(MN)^{\frac12}\right).
\]  
Since $Q \le x^{\theta- \frac12} \cl^{-A-B}$, we have
\begin{gather*}
\cl^B x^{\theta-\frac12} Q^2 x^{1-\theta} \ll   Q x^{\theta} \cl^{-A},\\
\cl^B x^{\theta - \frac12} Q x^{\frac12(1-\theta)} M^{\frac12} \ll Q \cl^B x^{\theta - \frac12 \epsilon},\\
\cl^B x^{\theta - \frac12} Q^{\frac32} x^{\frac34(1-\theta)}N^{\frac12}
\ll Q \cl^B x^{\theta- \frac14 \epsilon},
\end{gather*}
and
\[
\cl^B x^{\theta - \frac12} Q^{\frac12} x^{\frac14(1-\theta)} (MN)^{\frac12}
\ll Q \cl^B x^{\theta- \frac14 \epsilon}.
\]
This gives a satisfactory estimate for $S$. For the term $\Psi(T) (MN)^{\frac12}QT^{-\frac14}$ we use $T\ge1$:
\begin{align*}
\Psi(T) (MN)^{\frac12}QT^{-\frac14} &\ll x^{\theta - \frac12} x^{\frac14(1+\theta)} Q\\
&\ll Q x^{\theta} \cl^{-A}
\end{align*}
since $\frac14 (1+\theta) < \frac12$.  This covers Case 1.

\smallskip
\noindent
\emph{Case 2.} $K > QT$. From Lemma 2.4,
\begin{align*}
\Psi(T) ||KMN||_1 &\le \Psi(T) ||M||_2 ||N||_2 ||K||_{\infty}\\
&\ll
\Psi(T) \left(Q^2T+M  \right)^{\frac12} \left(Q^2T+N  \right)^{\frac12} \cl^B.
\end{align*}
Since 
\[
\left(Q^2T+N\right)^{\frac12} \ll \left(Q^2 T + N^2\right)^{\frac14} \left(Q^2 T  \right)^{\frac14}
\]
the result follows as in Case 1. This completes the proof of Lemma 2.5.
\end{proof}

\section{Sieve Estimates}
We first state Perron's formula in a suitable manner.
\begin{lemma}
Let $b>0, T>1$, and write
\[
E(u) = \begin{cases} 0 &\text{if} \ 0 < u < 1\\
1 &\text{if} \ u>1.
\end{cases}
\]
Then
\beq\label{3.1}
\frac{1}{2 \pi i} \int_{b-iT}^{b+iT} \frac{u^s}{s} \, ds = E(u) + O\left(\frac{u^b}{T |\log u|}  \right).
\endq
\end{lemma}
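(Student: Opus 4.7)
The statement is the standard truncated Perron formula, so the natural plan is to evaluate the contour integral by closing the rectangle and picking up (or not) the residue at $s=0$.

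First I would split into the two cases $u>1$ and $0<u<1$. In both cases I close the contour with a rectangle whose right/left vertical side is the segment from $b \pm iT$ to $\pm R \pm iT$, a horizontal top from $b+iT$ to $\mp R + iT$, a far vertical from $\mp R + iT$ to $\mp R - iT$, and a horizontal bottom from $\mp R - iT$ back to $b - iT$. For $u > 1$ I close to the left ($R \to \infty$ on the negative real axis), so that $|u^s| = u^{\sigma}$ decays; the function $u^s/s$ has a single simple pole inside the contour at $s=0$ with residue $1$, giving the main term $E(u)=1$ by the residue theorem. For $0 < u < 1$, $\log u < 0$ so $u^s$ decays as $\sigma \to +\infty$; I close to the right, the contour encloses no poles, and the residue theorem gives $0 = E(u)$.

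Next I would bound the three non-vertical pieces. On the far vertical segment $\sigma = \mp R$, the integrand is $O(u^{\mp R}/R)$, whose integral over a segment of length $2T$ tends to $0$ as $R \to \infty$ in each case. On a horizontal segment $\Im s = \pm T$, $|s| \geq T$, so
\[
\left| \int_{b}^{\mp \infty} \frac{u^{\sigma \pm iT}}{\sigma \pm iT}\, d\sigma \right| \le \frac{1}{T} \left| \int_b^{\mp \infty} u^{\sigma}\, d\sigma \right| = \frac{u^b}{T |\log u|},
\]
since the integral $\int u^{\sigma} d\sigma$ evaluates to $u^{\sigma}/\log u$ and the contribution at $\mp\infty$ vanishes by the choice of direction. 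Summing the two horizontal pieces produces a total of $O(u^b/(T|\log u|))$, which is exactly the claimed error term.

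The argument is entirely routine; the only points requiring a little care are (a) checking in each case that the direction of closure makes $u^s$ decay so that the far vertical side drops out in the limit $R \to \infty$, and (b) keeping track of the orientation so that the residue theorem produces $+1$ (not $-1$) when $u > 1$. Neither of these is a genuine obstacle, and the lemma follows immediately.
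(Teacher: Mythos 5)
Your contour-integral argument is correct and is the standard proof of the truncated Perron formula: close the rectangle to the left for $u>1$ to pick up the residue $1$ at $s=0$, close to the right for $0<u<1$ where there is no pole, let the far vertical edge drop out as $R\to\infty$, and bound each horizontal edge by $\tfrac{1}{T}\int u^{\sigma}\,d\sigma = \tfrac{u^{b}}{T|\log u|}$. The paper itself does not reprove the lemma but simply cites Lemma A.1 of Harman's \emph{Prime-detecting Sieves}; your proof supplies exactly the details that the citation suppresses, along the same route.
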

\begin{proof}
This is \cite[Lemma A.1]{GH} in different notation. 
\end{proof}

We note that, by standard procedures using the above, we can derive results of the form \eqref{1.5} for certain $f$ using Lemma 2.2 or 2.5. We do this explicitly for Lemma 2.2 giving the following result. 

\begin{lemma}
Suppose $x^{0.45} \le M \le x^{0.55}$ and $MN \asymp x$.  Write
\[
f(k) = \twolinesum{mn = k}{m \asymp M, n \asymp N} a_m b_n
\]
with $a_m, b_n$ subject to \eqref{2.3}. Then \eqref{1.5} holds for $f$.
\end{lemma}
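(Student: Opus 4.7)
The plan is to use Perron's formula (Lemma 3.1) to convert $E_f(y,h;\chi)$ into a contour integral of the Dirichlet polynomial $M(s,\chi)N(s,\chi)$, shift to the critical line, and then invoke Lemma 2.2 dyadically on the vertical line. Since we sum only over primitive characters of modulus $q \sim Q \ge 2$, every such $\chi$ is non-principal, so $\delta(\chi)=0$ and $E_f(y,h;\chi)=\sum_{y-h<k\le y}f(k)\chi(k)$. Taking $b=1+\cl^{-1}$ and a truncation $T=x^{B}$ (with $B$ a large absolute constant), Lemma 3.1 together with the bound $|f(k)|\ll d(k)^{B}$ and a standard separate treatment of those $k$ within $\sqrt{x}$ of $y$ or $y-h$ yields
\[
E_f(y,h;\chi)=\frac{1}{2\pi i}\int_{b-iT}^{b+iT}M(s,\chi)N(s,\chi)\,\frac{y^{s}-(y-h)^{s}}{s}\,ds+O\bigl(x^{-1}\bigr).
\]

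Because $M(s,\chi)N(s,\chi)$ is entire (it is just a Dirichlet polynomial) and $\chi$ is non-principal, the integrand has no poles, so the contour may be shifted to $\Real s=\tfrac12$; the two horizontal connectors contribute $O(x^{-1})$ when $T=x^{B}$, using Lemma 2.4 (or a trivial bound on $M,N$ away from the critical line). On $\Real s=\tfrac12$, writing $s=\tfrac12+it$ and using $y^{s}-(y-h)^{s}=s\int_{y-h}^{y}u^{s-1}\,du$ we obtain the uniform bound
\[
\left|\frac{y^{s}-(y-h)^{s}}{s}\right|\ll \min\!\bigl(hx^{-\frac12},\;x^{\frac12}/(|t|+1)\bigr)\ll \Psi(|t|+2),
\]
valid for all $y\sim x$ and $h\le x^{\theta}$, since $hx^{-\frac12}\le x^{\theta-\frac12}$. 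Consequently, pulling the maxima in $y$ and $h$ inside the integral and then summing over $q\sim Q$ and primitive $\chi\pmod q$,
\[
\sum_{q\sim Q}\sq\,\max_{h\le x^{\theta}}\max_{y\sim x}\bigl|E_f(y,h;\chi)\bigr|\;\ll\;\int_{-T}^{T}\Psi(|t|+2)\sum_{q\sim Q}\sq\,\bigl|MN(\tfrac12+it,\chi)\bigr|\,dt+O(1).
\]

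Finally, partition the $t$-range into $O(\cl)$ dyadic intervals $[(T_{0}-1)/2,T_{0}]$ with $1\le T_{0}\le x^{B}$. On each such interval Cauchy--Schwarz in the $(q,\chi,t)$ variables (or simply the definition of $\|MN\|_{1}$) gives
\[
\int_{(T_{0}-1)/2}^{T_{0}}\Psi(t+2)\sum_{q\sim Q}\sq\,\bigl|MN(\tfrac12+it,\chi)\bigr|\,dt\;\le\;\Psi(T_{0})\,\|MN\|_{1},
\]
with $T$ replaced by $T_{0}$ in the definition of $\|\cdot\|_{1}$. The hypothesis \eqref{2.4} of Lemma 2.2 holds trivially because $x^{0.45}\le M\le x^{0.55}$ and $MN\asymp x$ force $\max(M,N)\le x^{0.55}=x^{\theta-\epsilon}\ll x^{\theta}\cl^{-2A-B}$. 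Applying Lemma 2.2 with the exponent $A$ replaced by $A+1$, each dyadic piece is $\ll Qx^{\theta}\cl^{-A-1}$, and summing the $O(\cl)$ pieces yields the bound $\ll Qx^{\theta}\cl^{-A}$ required by \eqref{1.5}.

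The main substantive step is the reduction to the mean value in Lemma 2.2; the chief nuisance is merely bookkeeping the Perron error and the uniformity in $y,h$, which is handled by the uniform kernel estimate above. The fact that the conditions $M\le x^{0.55}$ and $MN\asymp x$ yield $N\le x^{0.55}$ as well is precisely what forces the statement to require $M\ge x^{0.45}$, and explains why no further structure of the coefficients $a_{m}, b_{n}$ (beyond the bound \eqref{2.3}) is needed.
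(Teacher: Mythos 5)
Your proof is correct and follows essentially the same route as the paper: Perron's formula to express $E_f(y,h;\chi)$ as a contour integral of $M(s,\chi)N(s,\chi)$ against the kernel $(y^s-(y-h)^s)/s$, the uniform kernel bound $\ll \Psi(|t|+2)$, a dyadic decomposition in $t$, and then Lemma~2.2 on each dyadic block after observing that \eqref{2.4} holds trivially. The only cosmetic differences are that the paper invokes Lemma~3.1 directly on the line $\Real s=\tfrac12$ with truncation $T=x$ (handling the small-$|\log(y/k)|$ issue by shifting $y,y-h$ to half-integers) rather than starting at $b=1+\cl^{-1}$ and shifting, but these are interchangeable bits of bookkeeping and your version of the Perron error and horizontal-connector estimates is fine.
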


\begin{proof} We first reduce the ``edge effects" by noting that we can restrict $y$ and $y-h$ to values $n + \frac12, n \in \mathbb{N}$ with a maximum error
\[
\sum_{q \sim Q} \sq  \max_{n \ll x} \tau(n)^B \ll Q^2 x^{\epsilon},
\]
which is certainly a negligible error.
It then follows from Lemma 3.1  that
\begin{align*}
E_f(y,h;\chi) &= \sum_{y-h<k\le y} f(k) \chi(k)\\
& =  \frac{1}{2 \pi i} \int_{\frac12-iT}^{\frac12+iT} M(s,\chi)N(s, \chi) \frac{y^s - (y-h)^s}{s} ds + 
O\left(V_{y,h}  \right),
\end{align*}
where
\[
V_{y,h} = \sum_{k \asymp x} \tau(k)^B \frac{1}{T \min(|\log(y/k)|, |\log((y-h)/k)|)}\,.
\]
A simple calculation gives
\[
\max_{\frac12 x \le y \le x} \, \max_{h \le x^{\theta}} V_{y,h} \ll \frac{x^{1 + \frac12 \epsilon}}{T}\,.
\]
If we take $T=x$ the error is again negligible from the above.

Next we note that
\[
 \frac{y^s - (y-h)^s}{s} = \int_{y-h}^y u^{s-1} du \ll hx^{-\frac12},
\]
and
\[
\frac{y^s - (y-h)^s}{s} \ll \frac{x^{\frac12}}{|s|}.
\]
It follows that, for $t \sim T$, we have
\[
\frac{y^s - (y-h)^s}{s} \ll \Psi(T).
\]
This demonstrates the significance of our choice for $\Psi(T)$.
It therefore remains to prove that, with $s=\frac12 + it$,
\[
\sum_{q \sim Q} \sq \max_{1 \le T \le x}  \Psi(T) \cl \int_{\frac12(T-1)}^{T} |M(s,\chi) N(s, \chi)| dt  \ll Q x^{\theta} \cl^{-A}.
\]
This follows immediately from Lemma 2.2 and completes the proof.
\end{proof}

The result we need to cover the case $Q> \cl^{A+B}$ is as follows. The result essentially follows for $Q \le \cl^{A+B}$ by the work done in \cite{BHP}. The result for $q=1$ goes back in principle at least as far as \cite{BH} (see Lemma 4 and following there). The proof of the result we need is actually simpler in principle since, in the case $Q> \cl^{A+B}$, we use Lemma 2.2 which is a straightforward Type II estimate that does not require one of
\[
||M||_{\infty} \ll M^{\frac12} \cl^{-A} \quad \text{or} \  ||N||_{\infty} \ll  N^{\frac12} \cl^{-A} 
\]
to hold.

\begin{lemma}
Suppose $M, N$ satisfy \eqref{3.6}.   Write
\[
f(k) = \twolinesum{mn \ell = k}{m \asymp M, n \asymp N} a_m b_n \psi(\ell, x^{0.1})
\]
with $a_m, b_n$ subject to \eqref{2.3}. Then \eqref{1.5} holds for $f$.
\end{lemma}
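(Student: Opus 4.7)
The plan is to follow the Perron reduction that proved Lemma 3.2 and then exploit the sieve factor $\psi(\ell,x^{0.1})$ combinatorially to reduce matters to the Type~II estimate of Lemma 2.2, as flagged in the paragraph preceding the lemma. Set $K = x/(MN)$, so $K \gg x^{0.225}$ by \eqref{3.6}, and write $P(s,\chi)=\sum_{\ell\asymp K}\psi(\ell,x^{0.1})\chi(\ell)\ell^{-s}$. Repeating the edge-reduction and contour argument in Lemma 3.2 verbatim reduces the claim to
\[
\sum_{q\sim Q}\sq\,\max_{1\le T\le x}\Psi(T)\,\cl\int_{(T-1)/2}^{T}\bigl|M(\tfrac12+it,\chi)\,N(\tfrac12+it,\chi)\,P(\tfrac12+it,\chi)\bigr|\,dt \ll Qx^{\theta}\cl^{-A}.
\]

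Any $\ell$ in the support of $\psi(\cdot,x^{0.1})$ has all prime factors at least $x^{0.1}$ and at most ten prime factors since $\ell\le x$. Ordering these primes $q_1\le\cdots\le q_r$ and looking at the partial products $\sigma_j=q_1\cdots q_j$, each step multiplies by a prime at least $x^{0.1}$, so one can extract a divisor $\ell_1\mid\ell$ lying in any prescribed dyadic range $[L_1,2L_1]$ with $L_1\ge x^{0.1}$ (or take $\ell_1=1$), except when a single prime factor of $\ell$ overshoots the target $L_1$. After $O(\cl)$ dyadic splits and a bookkeeping step to fold the factorization weights into bounded coefficients, it suffices to prove $\Psi(T)\|UV\|_1\ll Qx^{\theta}\cl^{-A}$ for Dirichlet polynomials $U,V$ of lengths $U\asymp ML_1$, $V\asymp NL_2$ with $L_1L_2\asymp K$ and $|c_u|,|c'_v|\le d(\cdot)^B$, hence $UV\asymp x$.

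The sizes $L_1,L_2$ are chosen by cases. If $M\ge x^{0.45}$, put $L_1=1$, so $U\asymp M\ll x^{0.55}$ and $V\asymp x/M\ll x^{0.55}$. If $M<x^{0.45}$, pick $L_1\asymp x^{0.55+\epsilon/2}/(M\cl^B)$, which lies in $[x^{0.1},K]$ because $M<x^{0.45}$ and $K\gg x^{0.225}$; then $U\asymp ML_1\ll x^{0.55+\epsilon/2}\cl^{-B}$ and $V\asymp x/(ML_1)\ll x^{0.45-\epsilon/2}\cl^B\ll x^{0.55}$. In both cases $\max(U,V)\ll x^{\theta}\cl^{-2A-B}$, so Lemma 2.2 applies with $(U,V)$ in place of $(M,N)$ and delivers \eqref{2.5}, which is exactly what is needed.

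The principal technical obstacle is the combinatorial step in the second paragraph: ensuring that, uniformly over $\ell$, some divisor of the targeted size can be produced. The awkward case is when $\ell$ has a single prime factor well above the intended $L_1$, so that no initial product of primes of $\ell$ lies in $[L_1,2L_1]$. This is dealt with by a Buchstab iteration on $\psi(\ell,x^{0.1})$ in which the largest prime factor of $\ell$ is pulled out explicitly and reassigned to whichever of $U,V$ keeps both below $x^{\theta}\cl^{-2A-B}$; because $\ell$ has only $O(1)$ prime factors, a bounded number of such iterations suffices, preserving the bound on the coefficients and the number of pieces up to $\cl^B$.
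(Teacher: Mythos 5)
Your Perron reduction is fine, and the dyadic combinatorics in your second and third paragraphs does correctly reduce those configurations where a Type~II grouping exists to Lemma~2.2. The gap is in your last paragraph: the hypotheses \eqref{3.6} permit configurations for which \emph{no} Type~II grouping exists, and your proposed fix does not create one. Take $M\asymp N\asymp x^{0.1}$ (so $MN\asymp x^{0.2}$ and $K\asymp x^{0.8}$, all compatible with \eqref{3.6}) and consider the contribution of $\ell=p$ a single prime $\asymp x^{0.8}$, which certainly lies in the support of $\psi(\cdot,x^{0.1})$. The only divisors of $\ell$ are $1$ and $p$, so the available groupings are $(m,np)$, $(n,mp)$, $(mn,p)$; in each one the factor carrying $p$ has size $\gg x^{0.8}$, violating \eqref{2.4}, while the complementary factor has size $\ll x^{0.2}$. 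Pulling out ``the largest prime factor'' of $\ell$ and reassigning it, as you suggest, still leaves whichever of $U,V$ receives $p$ well above $x^{\theta}\cl^{-2A-B}$, and $\ell$ has no further prime factor on which to iterate.

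This is precisely the regime that Lemma~2.5 was supplied for, and the paper's proof cites both Lemma~2.2 \emph{and} Lemma~2.5, following the Fundamental-Theorem machinery of \cite[\S4]{BHP} and \cite[Theorem~3.1, \S10.5]{GH}. When the long variable cannot be split into the Type~II window, one must first apply Buchstab's identity to replace the sieve weight $\psi(\ell,x^{0.1})$ by a main term whose long factor is a genuinely smooth Dirichlet polynomial $K(s,\chi)$ with coefficients equal to $1$, plus correction terms carrying new prime variables $<x^{0.1}$ that can be glued onto $m$ or $n$ and re-attacked; the hybrid estimate of Lemma~2.5 (built on the Fujii--Gallagher--Montgomery pointwise bound and the fourth-moment bound of Lemma~2.3) then controls the smooth long factor, and its hypotheses were engineered to coincide with \eqref{3.6}. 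An argument that invokes only Lemma~2.2 cannot close this case.
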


\begin{proof}
We could say: use the results of Lemmas 2.2 and 2.5 in the method of proof of \cite[\S 4]{BHP}, also given in \cite[\S 10.5]{GH}. The value $x^{0.1}$ comes from the ``width" of the Type sum in Lemma 2.2 (that is $x^{0.45}$ to $x^{055}$), and Lemma 2.5 is exactly analogous to the corresponding results in \cite{BH, BHP,GH}. By ``analogous", we mean here that our results are, as expected by the nature of the problem, a factor $Q$ larger than those in the previously cited works. However, as noted above, we are in a simpler situation (the preliminary proof of the result with $x^{0.1}$  replaced by $\exp\left( \cl^{0.9}\right)$ is unnecessary). We could therefore prove the result using a slight modification to \cite[Theorem 3.1]{GH}.
\end{proof}

By combining the two above lemmas it is possible to construct a lower bound prime detecting sieve using the method first delineated in \cite{h83} and developed in \cite{h96}, with many developments chronicled in \cite {GH}. This will be done explicitly in the following sections once we have outlined the general method and how to ``intersect" one sieve with another. The crucial additional result needed to construct the sieve is Buchstab's identity, which in the form we require, states that, for any $2 \le w < z$, we have
\[
\psi(k,z) = \psi(k,w) - \twolinesum{k=p \ell}{w \le p < z} \psi(\ell,p)\,. 
\]

\section{Some Further Notation}
Given an arithmetic function $f(n)$ we write $\bold{f}$ to mean the sequence $f(k), k \in [\tfrac12 x,x)\cap \mathbb{N}$.  So, for example, $\bl$ means the sequence $\lambda(k), \frac12 x \le k < x$. 
We recall the definition of a \emph{sublinear functional} $\cm$ defined over the set of all real sequences. That is, given sequences 
$\bold{w}, \bold{v}$,
we have
\[
\cm(c \bold{w}) = |c| \cm(\bold{w}) \ \ (c \in \mathbb{R}), \qquad
\cm(\bold{w} + \bold{v}) \le \cm(\bold{w}) + \cm(\bold{v})\,.
\]
We then define a sequence $\bf w$ to be \emph{$\cm$-regular} if it satisfies
\[
\cm(\bold{w}) \ll 1.
\]
Here we assume the parameter $x \rightarrow \infty$ and the implied constant is independent of $x$.
For a set $\Phi$ of sublinear functionals, a sequence $\bold{w}$ is said to be \emph{$\Phi$-regular} if $\bold{w}$ is $\cm$-regular for each $\cm \in \Phi$. 
 
\noindent
{\bf Example 1.}  Let $A > 0$; let $Q_0(A) = x^{\theta-\frac12} \cl^{-A-B}, \eta =\eta(Q)= x^{-\theta} Q^{-1} \cl^A$ and
\[
\cm_Q(\bw) = \eta \sum_{q \sim Q} \sq \max_{h \le x^{\theta}} \, \max_{y \sim x} \left|E_w(y, h, \chi)  \right|\,.
\]
We put
\[
\Phi_1=\left\{\cm_Q: \cl^{A+B} \le Q \le Q_0  \right\}\,, \quad \Phi_2=\left\{\cm_Q: 1 \le   Q \le  \cl^{A+B}  \right\}\,.
\]
Then (1.4) of Theorem 2 is the requirement that $\bl$ be both $\Phi_1$-regular and $\Phi_2$-regular.

In the following we write $\cb$ for the set of integers in the interval $(y-h_0,y)$ for some $y \sim x$,
\vskip5pt
\noindent
{\bf Example 2.} Let $\ca \subset \cb$ and let $\sigma$ be an approximation to $|\ca| |\cb|^{-1}$. We wish to show that $\ca$ contains primes. Let
\[
\cm(\bw) = \frac{\cl^2}{\sigma |\cb|} \left|\sum_{k \in \ca} w(k) - \sigma \sum_{k \in \cb} w(k)  \right|\,.
\]
Suppose an $\cm$-regular sequence $\bl$ satisfies $\lambda(k) \le \rho(k)$ and
\beq\label{4.1}
\sum_{k \in \cb} \lambda(k) \gg \sum_{k \in \cb} \rho(k)\,.
\endq
Then we have
\[
\left| \sum_{k \in \ca} \lambda(k) - \sigma \sum_{k \in \cb} \lambda(k)\,\right| \ll \frac{\sigma |\cb|}{\cl^2},
\]
and, from the prime number theorem for the slightly shortened interval \\ $(y-h_0,y)$,
\[
\sigma \sum_{k \in \cb} \lambda(k) \gg \frac{ \sigma |\cb|}{ \cl}\,.
\]
Consequently
\[
\sum_{k \in \ca} \rho(k) \ge \sum_{k \in \ca} \lambda(k) \gg \sigma |\cb| \cl^{-1}\,,
\]
and so $\ca$ contains primes.

By the sublinearity definition, we can thus establish lower bounds for the number of primes in sets by constructing $\bl$ as above using a linear combination of a bounded number of regular sequences.  In particular, to prove Theorem 1.2, we need to show that the $\bl$ we construct is a combination of sequences that are regular for both $\Phi_1$ and $\Phi_2$. We now briefly sketch the process developed in \cite{ h83, h96, GH} for constructing a lower bound sieve $\bl$. Let us only consider $k > \frac12 x$.  Then, for any value $z_0$,
\[
\rho(k) = \psi(k,x^{\frac12}) = \psi(k,z_0) - \twolinesum{z_0 \le p < x^{\frac12}}{k=p\ell} \psi(\ell,p) \,,
\]
using Buchstab's identity. We choose $z_0$ to be the largest value with $\psi(k,z_0)$ being regular. 
Now we may apply Buchstab's identity a bounded number of times -- indeed we want an \emph{even} (say $K$) number of times -- to give
\[
\rho(k) = \psi(k,z_0) - f_1(k) - f_3(k) - \ldots - f_{K-1}(k) + g_2(k) + \ldots +g_K(k)
\]
for certain functions $f_j,g_j$ defined as sums of the $\psi$ function over $j+1$ variables which multiply together to give $k$. By their definition, these functions are non-negative. Now, suppose each $f_j(k)$ is regular, and we split $g_j$ into a regular term and a (possibly non-existent) term which is not regular: $g_j(k) = r_j(k) + s_j(k)$, say. Then the function
\beq\label{lamdef}
\lambda(k) = \psi(k,z_0) - f_1(k) - f_3(k) - \ldots - f_{K-1}(k) + r_2(k) + \ldots + r_K(k)
\endq
is certainly regular by the sublinearity definition.  Since we have only discarded non-negative terms, we have $\lambda(k) \le \rho(k)$.  The challenge that remains is to carry out the above process in such a way that \eqref{4.1} holds. When successful, we shall call $\bl$ a \emph{non-trivial lower bound prime-detecting sieve} for whatever $\cm$ or family of $\cm$ is under consideration.   Clearly our ultimate challenge is to find such a $\bl$ for Example 1, regular for both families $\Phi_1$ and $\Phi_2$ defined there.

Given a sublinear functional $\cm$ (or family $\Phi$ of such), we call the process leading to \eqref{lamdef} a \emph{permissible decomposition for
$\cm$ (respectively, $\Phi$)}. We write $\frakD$ with or without a subscript for a permissible decomposition.  Now, if \eqref{4.1} holds with $\lambda(k) \le \rho(k)$, then clearly
\[
\sum_{k \in \cb} \lambda(k) =(1 - d) \sum_{k \in \cb} \rho(k)\,
\]
for some $d \in (0,1)$.  By what we have written so far, $d$ might vary somewhat with $x$, but we shall show later that in practice $d = \delta +o(1)$ for a fixed $\delta \in (0,1)$. We write this as $\delta(\frakD)$ to show that it depends essentially on the decomposition process (equivalently we could have defined it as $\delta(\bl)$).

\section{Intersecting Two Prime-Detecting Sieves}

When constructing a prime detecting sieve as outlined in \cite[Chapters 3, 5--11]{GH}, one first garners the necessary arithmetical information (what is commonly know as Type I \& II information, although there are sub-varieties of these types that appear as well), and then one applies Buchstab's identity to decompose a sum over primes in a given set, $\ca$, into multiple sums. Alternatively, and equivalently, as in the previous section we use Buchstab's identity applied to $\rho(k)$ summed over the set of interest, which often means we weight $\rho(k)$ with auxiliary functions like $k^{-s}, \chi(k)$ or $e(\alpha \ell k)$. 

Suppose $K$ is an even integer, and $\zeta>0$  (but not arbitrarily small)  is given. Write $\xi = x^{\zeta}$.
 For $j \le K < \zeta^{-1}$ write
\[
E_j=\{\ba_j: \alpha_i \ge \zeta \ (1 \le i \le j), \alpha_1+\ldots+\alpha_j \le 1\}.
\]
%In the following the letter $p$ with or without subscript always denotes a prime. 
We write $\mathbf{p}_j=(p_1, \ldots,p_j)$ where
$p_i=x^{\alpha_i}, 1 \le i \le j, \ba_j \in E_j$, and
\[
\Pi_j = \prod_{1 \le i \le j} p_j\,.
\]
Given a Jordan measurable subset $\cf$ of $E_j$ we may thus write 
\[
\sum_{\boldsymbol{\alpha}_j \in \cf}
\]
to indicate the summation ranges for $\mathbf{p}_j$. In practice, $\cf$ will always be the interior of a polyhedron (or union of finitely many polyhedrons).
When we apply Buchstab's identity in a straightforward way we will have summation ranges satisfying
\[
p_j < p_{j-1}< \ldots < p_1, \qquad {p_r}^2 (p_1\ldots p_{r-1}) < x, \ \text{for} \ \ 2 \le r \le j.
\]
We write $\ch_j$ for the set of all possible values of $\boldsymbol{\alpha}_j$ in this case.

When we sketched out a decomposition in the previous section we assumed we would always decompose the ``inner variable" counted with the $\psi$ notation using Buchstab's identity. This is not always the most efficient course of action. Instead one can apply what is known as a r\^ole-reversal. We describe the simplest case of this technique (see also \cite[\S3.5]{GH}).  Suppose $\cf$ is a Jordan measurable region of $\ch_2$ and suppose we have a term to consider of the form
\[
w(k) = \twolinesum{\ba_2 \in \cf}{k=\Pi_2 \ell} \psi(\ell,p_2)\,.
\]
Instead of applying Buchstab's identity to $\psi(\ell,p_2)$, we rewrite $\bw$ as
\beq\label{rr}
w(k) = \twolinesum{\ba_2 \in \cf}{k=n_1 p_2 \ell} \psi(\ell,p_2) \psi\left(n_1, \left(\frac{x}{\ell p_2}  \right)^{\frac12}  \right)
\endq
where $n_1 = x^{\alpha_1}$, and apply Buchstab's identity to
\[
\psi\left(n_1, \left(\frac{x}{\ell p_2}  \right)^{\frac12}  \right)\,.
\]
Clearly one could alternatively have applied this process to decompose $p_2$ as well or instead of $p_1$.  The possibilities become rather complicated at this point (consider the working in \cite[pp.32-41]{BHP}). 
We will first set up the sieve machinery in the case where there are no r\^ole-reversals.  We shall need to take these into account later to prove our main theorems, but the general result we prove initially is not valid when r\^ole-reversals are used. For the purposes of clarity it may help the reader to see the simplest case first.  

Our first task is to describe the decomposition process in a way that will help develop the method for intersecting two sieves.  Let $\Phi$ be a set of sublinear functionals, and our desire is to construct $\bl$ which is $\Phi$-regular. 
Now, given any Jordan-measurable set $R_j \subset E_j$  we write $R_j^*$ for the set of all
$\ba_t \in E_t, t \ge j$ whose coordinates can be partitioned into $j$ sets (none of which is empty) such that if we form the vector $\ba_j$  by summing the $\alpha_i$ in the $j$  sets, then $\ba_j \in R_j$. Suppose there are sets $R_j$  and functions $z(\ba_j) \ge \xi$ such that if we write
\[
\car = \bigcup_{j} R_j^*
\]
then the sequence
\[
v_k = \twolinesum{k=\Pi_{t} \ell}{\ba_t \in V_t\cap \car} \psi(\ell,z'(\ba_t))
\]
is $\Phi$-regular for every Jordan-measurable subset  $V_t \subset E_t$ and all functions $z'(\ba_t) \le z(\ba_t)$. If $U_j$ is a Jordan-measurable subset of  $ E_j\cap\car$ then we say $U_j$ is  a \emph{Type I $\Phi$-regular domain}.
Let $U_j$ be such a Type I $\Phi$-regular domain with associated $z(\ba_j)$. 
Write
\[
w_k = \twolinesum{k=\Pi_{j} \ell p}{\ba_j \in U_j} \sum_{z'(\ba_j) \le p < z(\ba_j)}\psi(\ell,p) \,.
\]
Then, by Buchstab's identity, for any $z'(\ba_j)< z(\ba_j)$ we have
\[
w_k
=
\twolinesum{k=\Pi_{j} \ell}{\ba_j \in U_j} \psi(\ell,z(\ba_j))
-
\twolinesum{k=\Pi_{j} \ell}{\ba_j \in U_j} \psi(\ell,z'(\ba_j))\,,
\]
and so $\bw$ is $\Phi$-regular. For future reference we call this \emph{Property $\car$}.

 Results such as these usually follow from what the second-named author has called \emph{The Fundamental Theorem}: see \cite[Theorems 3.1, 5.2]{GH}. By the method of proof, if we get regularity with values $z(\ba_t)$, we also get regularity for any $z'(\ba_t) \in [2,z(\ba_t)]$.  In our present context, for $\Phi_1$-regularity, Lemma 3.3  gives 
\[
R_2 = \{\ba_2 \in E_2: \alpha_1 \le 0.55, \ \alpha_2 \le 0.275, \ \alpha_1 + \alpha_2 \le 0.775\},
\]
with $z(\ba_j) \equiv x^{ \frac{1}{10}}$. 
We would thus take $\zeta={\frac{1}{10}}$ in this case. We write, in general, $z(\ba_j) = x^{\zeta(\ba_j)}$, and note that $\zeta(\ba_j)$ will be piece-wise linear in each coordinate by the methods used (for example, see \cite[p.153, Diagram 7.3]{GH}) and $\zeta$ would be the infimum of all $\zeta(\ba_j)$. 

Now, if we use r\^ole-reversals, we might have in our sums instead of $p_i$  a ``sieved-variable" $n_i$ all of whose prime factors exceed $g(\ba_t)= x^{\gamma(\ba_t)}$ where $\gamma(\cdot)$ is piecewise linear in each variable. That is we are counting $n_i$ with a weight $\psi(n_i,g(\ba_t))$.  We can do this by breaking up the sum into multiple sums over appropriate domains. 
We remark that we can replace as many prime variables $p_i$ as we wish by this process.  We illustrate this by considering 
\eqref{rr}.  The variable $\ell$ there has all its prime factors exceeding $p_2$ so, for some integer $t < \zeta^{-1}$,
\[
w(k) =\sum_{3\le h\le t} \twolinesum{\ba_2 \in \cf}{k=n_1 p_2 p_3 \cdots p_h} \psi\left(n_1, \left(\frac{x}{p_2 p_3 \cdots p_h}  \right)^{\frac12}  \right)
\]
Here the variables $p_j, j \ge 3$ satisfy $p_h > p_{h-1} > \ldots >p_3 > p_2$. We can rewrite $w(k)$ then as
\[
w(k) =\sum_{3\le h\le t} \twolinesum{\ba_h \in \cf_h}{k=n p_2 p_3 \cdots p_h} \psi\left(n, \left(\frac{x}{p_2 p_3 \cdots p_h}  \right)^{\frac12}  \right)
\]
for certain regions $\cf_h \in E_h$. For example,
\[
\cf_2 = \{ \ba_2: (1-\alpha_1-\alpha_2, \alpha_1) \in \cf, \alpha_2>\alpha_1  \}\,,
\]
and
\[
\cf_3 = \{ \ba_3: (1-\alpha_1-\alpha_2-\alpha_3, \alpha_1) \in \cf, \alpha_3 > \alpha_2>\alpha_1  \}\,.
\]

Similarly, but with one crucial difference, given any Jordan-measurable set $S_j \subset E_j$  we define  $S_j^*$ to be the set of all
$\ba_t \in E_t, t \ge j$ whose coordinates can be partitioned into $j+1$ sets (at most one of which is empty) such that if we form the vector $\ba_j$  by summing the $\alpha_i$ in $j$ of the (non-empty) sets, then $\ba_j \in S_j$. Suppose there are sets $S_j$ such that if we write
\[
\cs = \bigcup_{j} S_j^*
\]
then the sequence
\[
v_k = \twolinesum{k=\Pi_{t}}{\ba_t \in V_t\cap \cs} 1
\]
is $\Phi$-regular for every Jordan-measurable subset  $V_t \subset E_t$. If $U_j$ is a Jordan-measurable subset of  $ E_j\cap\cs$ then we say $U_j$ is  a \emph{Type II $\Phi$-regular domain}.  This corresponds to Type II (and possible more complicated variants) information in the usual description of this sieve method. For $\Phi_1$ we  have just the one set $S_1=[0.45,0.55]$ by Lemma 3.2. So, in this case, $\cs$ is comprised of all regions in $j$ dimensions, $1 \le j \le \zeta^{-1}$, contained in $E_j$ where a sum of a subset of the coordinates lies in $S_1$. This is just the union of a finite number of polyhedra. It follows from our definition of $\cs$ that the sequence
\[
w_k = \twolinesum{k=n_1 \cdots n_t}{\ba_t \in V_t \cap \cs} c_1(n_1)\cdots c_t(n_t)
\]
is $\Phi$-regular for any Jordan-measurable subset $V_t \subset E_t$, where each $c_j(n)$ is either $\rho(n)$ or $\psi(n,g(\ba_t))$ for some function $g(\ba_t)= x^{\gamma(\ba_t)}$ where $\gamma(\cdot)$ is piecewise linear in each variable. Let us illustrate this with the simplest case.
Suppose we investigate
\[
\twolinesum{\ba_j \in V_j \cap \cs}{k=\Pi_j\ell} \psi(\ell,g(\ba_j)).
\] 
This can be written as the sum of $< \zeta^{-1}$ sequences corresponding to $k=\Pi_j p_{j+1}\cdots p_{t}$.
Each sequence is $\Phi$-regular from the definition of $\cs$, because $\ba_t$ belongs to a Jordan-measurable region (guaranteed by $\ba_j \in V_j$ and the subsequent coordinates only have piecewise linear restrictions).

Consider now how the decomposition is framed to construct a lower bound sieve.  We always start with
\beq\label{start}
\rho(k) = \psi(k,z_0) - \twolinesum{z_0 \le p_1 < x^{\frac12}}{k=p_1 \ell} \psi(\ell,z(\alpha_1))
+\twolinesum{z(\alpha_1)\le p_2 < p_1<x^{\frac12}}{k=\Pi_2 \ell} \psi(\ell, p_2).
\endq
The $z$ values are usually chosen to be the largest  such that the first two sequences on the right hand side above are $\Phi$-regular, and, writing $z_0 = x^{\zeta_0}$, we assume $[\zeta_0,\tfrac12]$ is a Type I $\Phi$-regular domain. As above, we let $\xi$ be the minimum taken by any $z$ value. We write the second term on the right of \eqref{start} as $f_1(k)$ to conform with \eqref{lamdef}.
We note that the final sum, which for future reference we shall call $\Sigma_2(k)$, has no terms with $p_2^2p_1>x$ (this corresponds to the condition $\ba_2 \in \ch_2$). Let $\cf \subset \ch_2$ correspond to the values of $\ba_2$  in the final sum above.  Let us write $\cf_1 = \cf \cap \cs$. So
\[
v_2(k) = \twolinesum{\ba_2 \in \cf_1}{k=\Pi_2 \ell} \psi(\ell, p_2)
\]
is $\Phi$-regular. Now let $\cg = \cf \setminus \cf_1$, and write $\cf_3$ for the subset of $\cg$ for which two further applications of Buchstab's identity are permissible, that is
\[
\cf_3=\left\{\ba_2 \in \cg \cap\car: \{\ba_3 \in \ch_3: \zeta(\ba_2) \le \alpha_3 < \alpha_2\}\subset \car \right\} \,.
\]
We have thus written
\[
 \twolinesum{\ba_2 \in \cf_3}{k=\Pi_2 \ell} \psi(\ell, p_2)
=
 \twolinesum{\ba_2 \in \cf_3}{k=\Pi_2 \ell} \psi(\ell, z(\ba_2)) - \threelinesum{\ba_2 \in \cf_3}{\zeta(\ba_2) \le \alpha_3 < \alpha_2}{k=\Pi_3 \ell} \psi(\ell, z(\ba_3)) + \Sigma_4(k)\,,
\]
say, where the first two sequences on the right hand side above are  $\Phi$-regular and we call the second of these $f_3(k)$.

Finally, we write $\cf_2 = \cf \setminus \left( \cf_1 \cup \cf_3  \right)$. We then have the term
\[
s_2'(k) =  \twolinesum{\ba_2 \in \cf_2}{k=\Pi_2 \ell} \psi(\ell, p_2)
\]
which at first sight we might discard as a positive non-regular term.  However, it is possible to rewrite $s_2(k)$ as the combination of sequences written as sums over two, three or more primes:
\[
s_2^2(k) + s_2^3(k)+s_2^4(k) + \ldots = 
\twolinesum{\ba_2 \in \cf_2}{k=\Pi_2 p_3} 1 + \twolinesum{\ba_3 \in \cj_3} {k=\Pi_3p_4} 1
+ \twolinesum{\ba_4 \in \cj_4} {k=\Pi_4p_5} 1 \ldots,
\] 
for certain sets $\cj_i$.   The sequence $s_2^2(k)$ must be discarded. But we can split each $\cj_i$ into $\cj_i\cap \cs$ and $\cj_i \setminus \cs$
(one or other may be empty, of course) to obtain another regular sequence and a sequence to be discarded.  We may thus write
\[
s_2'(k) = s_2(k) + \sigma_2(k)
\]
where $s_2(k)$ is discarded and $\sigma_2(k)$ is regular.
We then have 
\[
r_2(k) = v_2(k) + \sigma_2(k) + \twolinesum{\ba_2 \in \cf_3}{k=\Pi_2 \ell} \psi(\ell, z(\ba_2))
\]
in the notation of \eqref{lamdef}. We can present the regions $\cf_j$ on a diagram for $\Phi_1$-regularity as follows.

\vskip5pt

\begin{tikzpicture}
%\draw (0,0) rectangle (10,8);
\draw[thick,->] (0,0) -- (11,0) node[anchor=north west] {$\alpha_1$};
\draw[thick,->] (0,0) -- (0,8) node[anchor=south east] {$\alpha_2$};
\draw(2,2)--(3.2,3.2) node [anchor=west] {\qquad $\cf_3$};
\draw(3.2,3.2)--(6.6,6.6)--(10,5)--(10,2);
\draw(9,2)--(9,3.5) node [anchor=west] {\kern-5pt $\cf_1$ \kern-50pt $\cf_2$};
\draw(9,3.5)--(9,5.5);
\draw(9,2)--(5.5,5.5);
\draw(7,2)-- (5.8,3.2) node [anchor=west] {\quad $\cf_1$};
\draw(5.8,3.2)--(4.5,4.5);;
\draw(2,2)--(10,2);
\foreach \x in {0,0.1,0.2,0.25,0.3,0.35,0.4,0.45,0.5}
   \draw (20*\x cm,1pt) -- (20*\x cm,-1pt) node[anchor=north] {$\x$};
\foreach \y in {0,0.1,0.2,0.3,0.35}
    \draw (1pt,20*\y cm) -- (-1pt,20*\y cm) node[anchor=east] {$\y$};
\end{tikzpicture}

\vskip10pt

Clearly we can analyze $\Sigma_4(k)$ as we did $\Sigma_2(k)$ and continue by induction to reach $\Sigma_K(k)$. For this last term we split into just two sums -- one corresponding to a region in $\cs$, and the remainder is discarded. We thus arrive at \eqref{lamdef} with $\bl$ the sum of $\Phi$-regular sequences, and so $\bl$  itself is $\Phi$-regular. We have thus defined our $\Phi$-permissible decomposition $\frakD$ leading to the lower bound sieve $\bl$ which is $\Phi$-regular.

Now suppose that $\cj_i \cap \cs = \emptyset$ for every $i$ at each stage of the decomposition (so $s_j'(k) = s_j(k)$).  Let $\cf_2(j)$ be the region corresponding to the discarded sum at each stage ($j$ even, $j \le K$). Let
\[
\delta_j = \int_{\cf_2(j)} \omega\left(\frac{1-\alpha_1 - \ldots - \alpha_j}{\alpha_j}\right) 
\frac{d\alpha_1 \ldots d\alpha_j}{\alpha_1 \ldots \alpha_{j-1}\alpha_j^2}\, ,
\]
and
\[
\delta(\frakD) = \twolinesum{j \ \text{even}}{j \le K} \delta_j.
\]
As explained in \cite[pp 16, 56--62]{GH},
\[
\sum_{k \in \cb} \lambda(k) =(1 - \delta(\frakD) + o(1)) \sum_{k \in \cb} \rho(k)\,.
\]
This value $\delta(\frakD)$ is independent of $x$ and so we can call it the \emph{deficit} of the lower bound sieve $\bl$. When 
 $\cj_i \cap \cs \ne \emptyset$ we can still define $\delta_j$ and hence $\delta(\frakD)$ independent of $x$, but the integrals concerned become much more complicated. For example, if we write
\[
\tau(\ba_j) = (\alpha_1 \cdots \alpha_j(1-\alpha_1-  \ldots - \alpha_j))^{-1},
\]
then
\[
\delta_2 = \int_{\ba_2 \in \cf_2(2)} \tau(\ba_2)  d\alpha_2 d \alpha_1
+
\sum_{j\ge 3} \int_{\ba_j \in \cj_j \setminus \cs} \tau(\ba_j)  d \alpha_j \ldots d\alpha_2 d \alpha_1 \,.
\]

We have now given enough notation and explanation to state and prove a general result on intersecting lower bound prime-detecting sieves.

\begin{theorem}
Let $\frakD_1, \frakD_2$ be two decompositions (constructed as above with no r\^ole-reversals) which are permissible for the families of sublinear functionals $\Xi_1,\Xi_2$ respectively. Then we can define a decomposition $\frakD_3 = \frakD_1 \cap \frakD_2$ which is permissible for $\Xi_1 \cup \Xi_2 $, and which satisfies
\beq\label{3}
\delta(\frakD_3) \le \delta(\frakD_1) + \delta(\frakD_2).
\endq
\end{theorem}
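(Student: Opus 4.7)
The plan is to construct $\frakD_3$ by running the two Buchstab decompositions $\frakD_1$ and $\frakD_2$ in parallel on a single tree, and at every node defining $\frakD_3$'s action in terms of the actions that $\frakD_1$ and $\frakD_2$ would take. Initialize by choosing $z_0^{(3)}$ in the intersection of the two Type I regular ranges so that $\psi(k,z_0^{(3)})$ is $\Xi_1\cup\Xi_2$-regular. Proceed inductively through stages $j=2,4,\ldots,K$ by applying Buchstab's identity twice at each stage, using for the cutoff $z(\ba_j)$ a value permissible for both families (e.g.\ the smaller of the two individual $z$-values, which is legitimate by Property $\car$).

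At a region $R\subset E_j$ that $\frakD_3$ reaches, maintain for each $i\in\{1,2\}$ a ``status'' -- ``kept'', ``active'', or ``discarded'' -- recording where $R$ would stand in $\frakD_i$'s own tree. If $\frakD_i$'s status is ``active'', execute $\frakD_i$'s decision rule on $R$: the status becomes ``kept'' if $R\subset\cs_i$, ``active'' (decompose further) if $R\in\cf_3^{(i)}$, and ``discarded'' otherwise. The $\frakD_3$-action on $R$ is then the natural combination: discard if either status is ``discarded''; keep if both statuses are ``kept''; decompose further in all remaining, mixed, cases.

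Verifying $\Xi_1\cup\Xi_2$-regularity of the resulting sieve is the core of the proof. In the pure ``keep'' case $R\subset\cs_1\cap\cs_2$, so Type II regularity is immediate for both families; in the pure ``decompose'' case Property $\car$ applies for both. The delicate mixed case -- and the main obstacle -- is when, say, $\frakD_1$ keeps ($R\subset\cs_1$) while $\frakD_2$ decomposes. Here one must show that the two Type I sums produced by Buchstab are $\Xi_1$-regular as well as $\Xi_2$-regular. The $\Xi_2$-regularity is immediate from $R\in\cf_3^{(2)}$. For $\Xi_1$-regularity one invokes the $(+1)$-bucket closure built into the definition of $S_j^*$: since $R\subset\cs_1=\bigcup_j S_j^*$, every Buchstab-expansion of $R$ to $E_{j+1},E_{j+2},\ldots$ remains in $\cs_1$ (the new coordinates $\alpha_{j+1},\alpha_{j+2},\ldots$ are absorbed into the $(+1)$-bucket of the certifying partition), and the Type I sum over such an extended region is a subsum of the corresponding $\cs_1$-Type II sum, hence $\Xi_1$-regular. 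The symmetric case is handled identically.

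Finally, for the deficit estimate, the construction ensures that $\frakD_3$ discards a region $R$ at stage $j$ only when at least one of $\frakD_1,\frakD_2$ is active at stage $j$ on $R$ and itself discards $R$ at that stage. Partitioning the $\frakD_3$-discards by which of the two sieves triggered them, the $i$-triggered discards at stage $j$ are a subset of $\cf_2^{(i)}(j)$. Integrating the common $\omega$-weight $\omega\left(\frac{1-\alpha_1-\cdots-\alpha_j}{\alpha_j}\right)/(\alpha_1\cdots\alpha_{j-1}\alpha_j^2)$ over this containment gives $\delta_j^{(3)}\le\delta_j^{(1)}+\delta_j^{(2)}$, and summing over the finitely many even stages $j\le K$ yields $\delta(\frakD_3)\le\delta(\frakD_1)+\delta(\frakD_2)$, as required.
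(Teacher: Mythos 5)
Your proposal is correct and follows essentially the same approach as the paper's own proof: both run the two decompositions in parallel (the paper by explicitly splitting each stage into $\cf_1^{(1)}\cap\cf_1^{(2)}$, $\cf_3^{(1)}\cap\cf_3^{(2)}$, $\cf_1^{(i)}\cap\cf_3^{(j)}$, and $\cf_2^{(1)}\cup\cf_2^{(2)}$; you via the equivalent ``kept/active/discarded'' bookkeeping), both take the smaller of the two $z$-values and absorb the enlarged ranges by Property $\car$, both resolve the mixed case by noting that $\ba_j\in\cs_1$ with the first $j$ coordinates held fixed implies $\ba_{j+1},\ba_{j+2}\in\cs_1$ --- your explicit ``extra coordinates go into the $(j+1)$-th bucket of the certifying partition for $S_l^*$'' is exactly what the paper calls a ``trivial consequence of the definition of a Type II $\Phi$-regular domain'' --- and both conclude by showing that the stage-$j$ discard region of $\frakD_3$ is contained in $\cf_2^{(1)}(j)\cup\cf_2^{(2)}(j)$, giving $\delta_j^{(3)}\le\delta_j^{(1)}+\delta_j^{(2)}$ and summing over $j$. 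Your status-tracking framework is a clean reformulation, and you spell out more explicitly than the paper why the mixed-case Buchstab terms retain regularity for the ``kept'' family, but the mathematical content is the same.
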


\begin{corollary}
Given the conditions of the theorem with $ \delta(\frakD_1) + \delta(\frakD_2) < 1$, then we can construct a non-trivial lower bound prime-detecting sieve $\bl$ which is both $\Xi_1$ and $\Xi_2$ regular.
\end{corollary}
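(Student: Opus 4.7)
The corollary is an almost immediate consequence of the theorem once the deficit identity from the preceding section is invoked. The plan is to feed $\frakD_1$ and $\frakD_2$ into the theorem to produce $\frakD_3$, extract the associated sequence $\bl$ as a $\Xi_1 \cup \Xi_2$-regular lower-bound sieve, and then convert the deficit bound \eqref{3} into the required positive lower bound for $\sum_{k \in \cb} \lambda(k)$.

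First I would apply the theorem directly: because $\frakD_1$ and $\frakD_2$ are permissible for $\Xi_1$ and $\Xi_2$ respectively, the theorem outputs a decomposition $\frakD_3 = \frakD_1 \cap \frakD_2$ permissible for $\Phi := \Xi_1 \cup \Xi_2$. Unwinding what ``permissible'' means in the framework culminating in \eqref{lamdef}, this produces an explicit sequence
\[
\lambda(k) = \psi(k,z_0) - f_1(k) - \cdots - f_{K-1}(k) + r_2(k) + \cdots + r_K(k),
\]
built as a finite sublinear combination of $\Phi$-regular sequences. By the sublinearity axioms $\bl$ is $\Phi$-regular, hence $\Xi_1$-regular and $\Xi_2$-regular, and because every discarded term in the decomposition process is non-negative we retain $\lambda(k) \le \rho(k)$ throughout.

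The remaining task is to verify non-triviality, i.e.\ $\sum_{k \in \cb} \lambda(k) \gg \sum_{k \in \cb} \rho(k)$. For this I would invoke the deficit identity from the end of Section~5, which reads
\[
\sum_{k \in \cb} \lambda(k) = \bigl(1 - \delta(\frakD_3) + o(1)\bigr) \sum_{k \in \cb} \rho(k).
\]
By the theorem $\delta(\frakD_3) \le \delta(\frakD_1) + \delta(\frakD_2)$, and the hypothesis of the corollary forces the right-hand side to be strictly less than $1$. Thus $1 - \delta(\frakD_3)$ is a fixed positive constant independent of $x$, and combining this with the prime number theorem on $(y - h_0, y)$ gives
\[
\sum_{k \in \cb} \lambda(k) \gg \sum_{k \in \cb} \rho(k) \gg h_0 \cl^{-1},
\]
which is exactly condition \eqref{4.1} of Example~2. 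Thus $\bl$ meets the definition of a non-trivial lower bound prime-detecting sieve regular for both $\Xi_1$ and $\Xi_2$.

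There is essentially no obstacle to surmount here since the heavy lifting is done by the theorem; the only subtlety worth noting is that the clean identity $\sum_{k \in \cb} \lambda(k) = (1 - \delta(\frakD) + o(1)) \sum_{k \in \cb} \rho(k)$ was stated in the simplest setting $\cj_i \cap \cs = \emptyset$. In the more general situation one replaces the $\delta_j$ by the more intricate integrals indicated at the end of Section~5, but $\delta(\frakD_3)$ remains a constant independent of $x$ and the subadditivity bound \eqref{3} continues to apply, so the argument goes through unchanged. The condition $\delta(\frakD_1) + \delta(\frakD_2) < 1$ is therefore precisely what is needed to promote the theorem's abstract intersection to a genuine prime-detecting statement.
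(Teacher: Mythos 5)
Your proposal is correct and follows exactly the route the paper intends: the corollary is an immediate consequence of Theorem 5.1 combined with the deficit identity $\sum_{k \in \cb} \lambda(k) = (1 - \delta(\frakD_3) + o(1)) \sum_{k \in \cb} \rho(k)$ and the subadditivity bound \eqref{3}, which is why the paper leaves it without a separate proof. Your closing remark about the more general case $\cj_i \cap \cs \ne \emptyset$ is also consistent with the paper's discussion at the end of Section 5.
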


\noindent
\emph{Proof of Theorem 5.1.}
For simplicity we suppose that $\cj_i \cap \cs = \emptyset$ for every $i$ at each stage for both decompositions. This does not change the principle involved in the proof  -- it just simplifies the details of describing the discarded sums at each stage.
Let $\car_1,\car_2, \cs_1, \cs_2$ correspond to $\frakD_1, \frakD_2$ respectively.
Our first task is to define $\frakD_3 = \frakD_1 \cap \frakD_2$. We do this  inductively as follows.
 We let $z_{0,j}$ and $z_j(\alpha_1)$ correspond to $\frakD_j$. We then put
\[
z_{0,3} = \min_{j=1,2} z_{0,j}, 
\qquad z_3(\alpha_1) = \min_{j=1,2} z_j(\alpha_1)\,.
\]
Now modify $\frakD_1, \frakD_2$ by replacing $z_{0,j}$ and $z_j(\alpha_1), j=1,2$ in their definition with the values for $j=3$. Of course, in some circumstances we may not have modified them at all, but in general the regions $\cf^{(j)}$ corresponding to our construction for each decomposition will have increased in size to a new $\cf^{(3)}$. Consider the region $\cf^{(3)}\setminus \cf^{(1)}$. For the sake of argument, suppose that $z_1(\alpha_1) > z_2(\alpha_1)$ for $\alpha_1 \in \ci$. This means that a new region
\[
\{(\alpha_1, \alpha_2): \alpha_1 \in \ci, \zeta_2(\alpha_1)< \alpha_2 < \zeta_1(\alpha_1)\}
\]
has appeared in the extended $\frakD_1$ decomposition. This increases the range of summation for the variable $\alpha_2$. However, by what we called \emph{Property  $\car$}, the sum over this new range is a $\Xi_1$-regular sequence. We get a similar result for $\cf^{(3)}\setminus \cf^{(2)}$. The crucial point is that increasing the summation domain does not disrupt the regularity property of the resulting sequence. So in the following we can redefine $\cf_1^{(1)}$ to be $\cf_1^{(1)}\cup (\cf^{(3)}\setminus \cf^{(1)})$, and similarly for $\cf_1^{(2)}$.

 In $\frakD_3$ we discard sums over the region $\cf_2^{(1)}\cup
\cf_2^{(2)}$. The sum over $\cf_1^{(1)}\cap\cf_1^{(2)}$ is clearly regular for both $\Xi_1$ and $\Xi_2$: call this term $u_2(k)$. Buchstab's identity can be applied twice more for $\cf_3^{(1)} \cap \cf_3^{(2)}$ leading to terms $v_2(k) - f_3(k) + \Sigma_4(k)$, say. Write $w_2(k) = u_2(k)+v_2(k)$. That only leaves the cases $\cf_1^{(i)} \cap \cf_3^{(j)}, \ i \ne j$ to discuss. Without loss of generality consider  $\cf_1^{(1)} \cap \cf_3^{(2)}$. In this case we work in $\frakD_3$ just as we would in $\frakD_2$ (that is, applying Buchstab's identity twice more) giving the same shape sums.  The important point to observe is that each of these sums is still regular for $\Xi_1$. To see this, simply note that in each of the new sums we still have $\alpha_1$ and $\alpha_2$ unchanged. Here we have observed that, keeping the first two coordinates fixed, $\ba_2 \in \cs_1 \Rightarrow \ba_3 \in \cs_1$ and $\ba_4 \in \cs_1$. This is a trivial consequence of our definition of a Type II $\Phi$-regular domain.
 We have thus started constructing our lower bound sieve $\bl$ which is regular for both $\Xi_1$ and $\Xi_2$. We have $\lambda(k) \le \rho(k)$ with
\[
\lambda(k) = \psi(k,z_{0,3}) - f_1(k) + w_2(k) - f_3(k)+ \Sigma_4(k),
\]
where the first $4$ terms on the right hand side above are  regular for both $\Xi_1$ and $\Xi_2$. As in our previous discussion we can treat $\Sigma_4$ similarly and continue to the case $K_3 = \max(K_1,K_2)$ by induction.  At each stage $\delta_j^{(3)} \le \delta_j^{(1)} + \delta_j^{(2)}$. Summing over $j$ then gives \eqref{3} as required.
\qed
\vskip5pt
Now, if r\^ole-reversals came into play, we can instantly see a problem might arise over a new type of region, even proceeding beyond $j=2$, where one  applies Buchstab twice more in $\frakD_1$ in a straightforward manner but uses a r\^ole reversal in $\frakD_2$, before applying Buchstab twice.   We are thus jumping straight from a two dimensional region to a six dimensional one.  Our philosophy would suggest we discard those parts of sums which are non-negative and not regular in the individual cases. This immediately gets complicated, because in mixing the two decompositions we then have to remove several sums, not all of which are non-negative.  We illustrate this as follows.  

Suppose at the $j=2$ stage we have the term
\[
\twolinesum{\ba_1 \in \cf}{k=p_1p_2\ell} \psi(\ell,p_2)
\]
which is regular for neither $\Phi_1$ nor $\Phi_2$.  Further suppose, that for $\Phi_1$ we can apply Buchstab twice more, and for $\Phi_2$ we rewrite it as per \eqref{rr}. This leads to the following two decompositions, where for simplicity (and since we are demonstrating that the result cannot be true in general) we shall assume that $z$ is fixed, for certain regions for the variables:
\beq\label{prob1}
\sum_{k=\Pi_2 \ell} \psi(\ell, z) - \sum_{k=\Pi_3 \ell} \psi(\ell, z) + \sum_{k=\Pi_4 \ell} \psi(\ell, p_4)\,,
\endq
\beq\label{prob2}
\sum_{k=np_2 \ell} \psi(\ell,p_2) \psi(n,z) - \sum_{k=np_2 p_5 \ell} \psi(\ell,p_2) \psi(n,z) + \sum_{k=np_2p_5p_6 \ell} \psi(\ell,p_2) \psi(n,p_6)\,.
\endq
We would expect to split the final sum in each of \eqref{prob1}, \eqref{prob2} into a regular term and a term which must be discarded.  Suppose that the discarded sum in \eqref{prob1} corresponds to $\ba_4 \in \ck$. We would then have to split the variable $\ell$ in each of the three terms in \eqref{prob2} into sums matching those in \eqref{prob1} and discard terms such as a sum,say $\Sigma'$, over (we state the most complicated)
$np_2p_3p_4p_5p_6 \ell$ with $np_5p_6 = x^{\alpha_1}$ and $\ba_4 \in \ck$.  Of course, at this point we are discarding some sums which could be negative (corresponding to the middle term in \eqref{prob2}).  However, on reversing Buchstab's identity it is clear that overall the discarded term is non-negative (indeed, it is $\delta_2^{(1)}$), and so the procedure is legitimate. It is a different matter, though, when we  come to discard that part of the final sum in \eqref{prob2} over a region not belonging to $\cs_2$, say we write this as ${\ba_4}' \in\ck'$ (${\ba_4}'$ corresponding to the variables $p_2, \ell, p_5,p_6$). We split into three cases with the variable $\ell$ unchanged, or decomposed as $\ell p_3$ or $\ell p_3 p_4$.  If we had not discarded $\Sigma'$ we could reverse Buchstab's identity and verify that the new discarded sum is overall non-negative (indeed, it would have been $\delta_2^{(2)}$). However, we cannot rule out that we are discarding negative terms and so our proof collapses.

We can put this argument another way. Let
\[
\sigma(k) = \threelinesum{k=np_2p_3p_4p_5p_6 \ell}{\ba_4 \in \ck}{{\ba_4}' \in \ck'} \psi(\ell,p_2) \psi(n,p_6)\,.
\]
Then, in general, we would have to assume that  $\sigma(k)$ is  regular for neither $\Phi_1$  nor $\Phi_2$ (though for some examples there might be a reason for it to be regular).  Now, if we were constructing a lower bound $\bl$ (regular for both $\Phi_1$  and $\Phi_2$) for $\rho(k) + \sigma(k)$, our method would work. But $\bl$ is no longer a lower bound prime-detecting sieve as it possibly gives a positive count for the numbers for which $\sigma(k)$ is non-zero.  We cannot retrieve the situation by subtracting off this term, because, since $\sigma(k)$  is not regular, $\lambda(k)-\sigma(k)$ cannot be regular.

\section{Application to Theorem 1.2}
Before proving Theorem 1.2 we first state a more general result which allows us to have r\^ole-reversals in one of the decompositions.
The trick, of course, is to ensure the problem that we described at the end of the last section cannot occur. To this end we establish the following result.

\begin{theorem}
Let $\frakD_1, \frakD_2$ be two decompositions which are permissible for the families of sublinear functionals $\Xi_1,\Xi_2$ respectively, and $\frakD_1$ involves no r\^ole-reversals.  Assume for $\frakD_1$ we apply Buchstab no more than 4 times, and at the $K=2$ stage there is a region $\cf_1$,  sums over which are $\Xi_1$-regular, a region $\cf_2$ which must be discarded, and a region $\cf_3$ where Buchstab can be applied twice more. Suppose further that $\cf_3=\cf_4 \cup \cf_5$ where sums over variables in $\cf_4$ are $\Xi_2$-regular (possibly using r\^ole-reversals) and, however the sums are treated over $\cf_5$ for $\frakD_2$, there are no r\^ole-reversals. Then \eqref{3} holds.
\end{theorem}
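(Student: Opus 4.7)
\emph{Proof plan.}

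The approach is to parallel the inductive construction used in the proof of Theorem 5.1, with special care taken over the subregion $\cf_4$ where $\frakD_2$ may employ role-reversals. I would first normalize the two decompositions so they share a common starting point, setting $z_{0,3}=\min(z_{0,1},z_{0,2})$ and $z_3(\alpha_1)=\min(z_1(\alpha_1),z_2(\alpha_1))$. Just as in Theorem 5.1, the resulting enlargement of the Type I regular domains contributes only further $\Xi_1$- and $\Xi_2$-regular sequences, by Property $\car$, and so costs nothing towards the deficit. This reduces the task to analysing, at the $K=2$ stage, the intersections of the $\frakD_1$ regions $\cf_1^{(1)},\cf_2^{(1)},\cf_3^{(1)}=\cf_4\cup\cf_5$ with the analogous regions $\cf_i^{(2)}$ of $\frakD_2$.

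On $\cf_5$, the hypothesis guarantees that $\frakD_2$ uses no role-reversals, so the intersection argument of Theorem 5.1 applies verbatim: I would discard $\cf_5\cap\cf_2^{(2)}$, keep $\cf_5\cap\cf_1^{(2)}$ as an $\Xi_1$- and $\Xi_2$-regular piece, and apply two further Buchstab steps on $\cf_5\cap\cf_3^{(2)}$, handling the mixed intersections $\cf_5\cap\cf_1^{(1)}\cap\cf_3^{(2)}$ and its symmetric cousin in the same manner as in Theorem 5.1. The contribution to $\delta(\frakD_3)$ from $\cf_5$ is then bounded by the corresponding $\cf_5$-parts of $\delta(\frakD_1)+\delta(\frakD_2)$.

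On $\cf_4$, the crucial observation is that $\frakD_2$ discards nothing: the sum over variables in $\cf_4$ is, by hypothesis, $\Xi_2$-regular as a whole. I would therefore follow only $\frakD_1$'s further decomposition on $\cf_4$, applying Buchstab twice more to obtain pieces which are $\Xi_1$-regular by $\frakD_1$'s design. Each such piece remains $\Xi_2$-regular, because the role-reversal mechanism that established $\Xi_2$-regularity on $\cf_4$ operates uniformly for sums with $\ba_2$-variables lying in any Jordan-measurable sub-region of $\cf_4$; subdividing the sieved inner variable $\ell$ by two additional Buchstab steps merely restricts that auxiliary variable and does not interact with the role-reversed variable, so no new sub-sum needs to be discarded for $\frakD_2$. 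This is precisely why the pathology flagged at the end of Section 5 cannot occur here: no sub-sum of a role-reversed expression, of possibly indeterminate sign, is ever thrown away. The $\cf_4$-contribution to $\delta(\frakD_3)$ is thus at most the $\cf_4$-part of $\delta(\frakD_1)$, with no $\delta(\frakD_2)$ contribution.

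Summing the $\cf_5$- and $\cf_4$-contributions, and propagating the same analysis through the $j=4$ layer of $\Sigma_4(k)$ coming from $\frakD_1$ (where by assumption $\frakD_1$'s Buchstab depth is at most four, so the induction is of bounded length), produces the bound \eqref{3}. I expect the heart of the argument, and the principal obstacle, to be the $\cf_4$ step: one must verify rigorously that applying $\frakD_1$'s two extra Buchstabs to an $\Xi_2$-regular sum obtained via role-reversal preserves $\Xi_2$-regularity of every piece. The hypothesis that role-reversals in $\frakD_2$ are confined to $\cf_4$, precisely the region where nothing has to be discarded, is what makes this verification possible and is exactly the structural restriction needed to block the counterexample of Section 5.
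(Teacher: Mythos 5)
Your proposal captures the $\cf_4$ case correctly — the paper handles it in the same spirit (discarding a four-dimensional region in $\frakD_1$ while all remaining sums over $\cf_4$-variables stay $\Xi_2$-regular) — but you have missed the other new case, which the paper treats first: $\frakD_2$ may apply r\^ole-reversals inside a subset $\cg$ of $\cf_1$, the region where $\frakD_1$ is already $\Xi_1$-regular and applies no further Buchstab steps. Your reading of the hypothesis focuses only on $\cf_4$ and $\cf_5 \subset \cf_3$, but the hypothesis is silent about how $\frakD_2$ decomposes $\cf_1$, so r\^ole-reversals there are permitted and must be shown harmless. The paper's resolution is that however $\frakD_2$ breaks up the variables in $k = p_1 p_2 \ell$, they can always be re-grouped at any stage into variables of the same sizes as $p_1, p_2, \ell$, so $\Xi_1$-regularity of the resulting pieces follows from the Type~II $\Phi$-regular domain property of $\cf_1$. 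Without this observation, $\frakD_3$ is not shown to be $\Xi_1$-permissible, so the proof is incomplete.

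There are also two smaller slips in your treatment of $\cf_5$. First, $\cf_5 \subset \cf_3^{(1)}$ and $\cf_3^{(1)}$ is disjoint from $\cf_1^{(1)}$, so the ``mixed intersection'' $\cf_5 \cap \cf_1^{(1)} \cap \cf_3^{(2)}$ you propose to handle is empty. Second, you say you would ``keep $\cf_5 \cap \cf_1^{(2)}$'' as a piece regular for both, but a sum with $\ba_2$-variables in $\cf_5 \cap \cf_1^{(2)}$ is $\Xi_2$-regular only; since $\cf_5 \subset \cf_3^{(1)}$, the sum is not known to be $\Xi_1$-regular as it stands, and as in Theorem 5.1 one must apply $\frakD_1$'s two further Buchstab steps there and then appeal to the stability of the Type~II $\Xi_2$-domain under the subdivision.
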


\begin{proof}
We only need to consider the two new cases introduced not covered by Theorem 5.1.  First of all, there may be a subset, say $\cg$ of $\cf_1$, where r\^ole-reversals are applied in the $\frakD_2$ decomposition.  This is easily dealt with, since, however the variables in $k=p_1p_2\ell$ are broken up, we can still combine them at any stage to give variables of the same size as $p_1, p_2, \ell$.  For $\cf_4$ no obstacle arises since we would only be discarding a four dimensional region in $\frakD_1$ and we have all the sums remaining $\Xi_2$-regular.  
\end{proof}

We can apply the above theorem directly to our problem with $\Xi_j=\Phi_j, j=1,2$. We have described the regions $\cf_j, 1 \le j \le3$ in the previous section. To cover $\frakD_2$ and the corresponding $\Phi_2$-regularity, consider the situation described in \cite{BHP} which gives
$\delta(\frakD_2) < 0.01$ when $\theta = 0.55 + \epsilon$. Here r\^ole-reversals are crucial. Looking at the proof, though, we notice that sums with $0.4 \le \alpha_1 +\alpha_2 \le 0.46$ are $\Phi_2$-regular \cite[Lemma 13]{BHP}.  Also, if $\alpha_1+\alpha_2 < 0.4$, no r\^ole-reversals are used (see \cite[p.37--38]{BHP}).  We can thus apply our theorem with $\delta(\frakD_2) < 0.01$.  Also, $\delta(\frakD_1)$ is the sum of the two integrals
\[
\int_{0.275}^{0.45} \int_{0.55-\alpha_1}^{\min(\alpha_1, (1-\alpha_1-\alpha_2)/2)} 
\omega\left(\frac{1-\alpha_1-\alpha_2}{\alpha_2}\right) \frac{d \alpha_2 d \alpha_1}{\alpha_2^2 \alpha_1}\,;
\] 
\[
\int_{\boldsymbol{\alpha}_4 \in \cg}
\omega\left(\frac{1-\alpha_1- \ldots\alpha_4}{\alpha_4}\right) \frac{d \alpha_4 \ldots d \alpha_1}{\alpha_4^2 \ldots \alpha_1}\,.
\]
Here $\cg$ is the region with $\boldsymbol{\alpha}_2 \in \cf_3, \boldsymbol{\alpha}_4 \in \ch_4, \alpha_4\ge 0.1$, and no sum of three or all four of
$\alpha_1,\ldots, \alpha_4$ is between $0.45$ and $0.55$.  

Calculations give the first integral as $<0.71$.  The second integral would have been $<0.16$ even before removing sums of variables between $0.45$ and $0.55$. With those removals the integral is $<0.02$.  So, we get a prime detecting lower bound sieve with a deficit $< \frac34$ for $\Phi_1\cup\Phi_2$, which establishes Theorem 2.

\section{Proof of Theorem 1.1}
This is not very different from the proof of Heath-Brown's result \cite{DRHB}, so we shall be brief.

\begin{lemma}

\end{lemma}
We have
\[
\twolinesum{n-2y < p \le n-y}{v|n-p} 1 \ll \frac{y}{\phi(v)} \log{y/v}
\]
for $y \ge 3v, n \ge 2y$.

\begin{proof}
See \cite[Theorem 3.7]{Halb}.
\end{proof}

\begin{lemma}
When $(n,d)=1$ we have
\[
\twolinesum{u \le E}{d|u, (u,n)=1} \frac{u}{\phi(u)} = CE \frac{\omega(d)}{d} f(n) + O\left((ED)^{\frac12} d(n) \phi(d)^{-1} \right)
\]
where
\[
C= \prod_p \left(1 + \frac{1}{p(p-1)}\right)
\]
and $\omega, f$ are multiplicative functions given by
\[
\omega(p^e) = \frac{p^2}{p^2-p+1}, \quad f(p^e) = \frac{(p-1)^2}{p^2-p+1}\,.
\]
\end{lemma}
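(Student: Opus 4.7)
The plan is to apply the Dirichlet convolution identity $u/\phi(u) = \sum_{v \mid u} \mu^2(v)/\phi(v)$, swap the order of summation, and estimate the inner count by Möbius inversion. After the swap, the left-hand side equals
\[
S = \sum_{\substack{v \ge 1 \\ (v,n)=1}} \frac{\mu^2(v)}{\phi(v)}\, N(v), \qquad N(v) = \#\{u \le E : [d,v]\mid u,\ (u,n)=1\}.
\]
Since $(d,n)=(v,n)=1$ forces $([d,v],n)=1$, a standard Möbius sieve over divisors of $n$ gives
\[
N(v) = \frac{E\phi(n)}{n[d,v]} + O(d(n)) = \frac{E(d,v)\phi(n)}{dvn} + O(d(n)).
\]

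To extract the main term $M = \frac{E\phi(n)}{dn}\sum_{v,\,(v,n)=1} \mu^2(v)(d,v)/(v\phi(v))$, I would decompose each contributing squarefree $v$ as $v = v_1 v_2$ with $v_1 = (v, \mathrm{rad}(d))$ and $(v_2,d)=1$; then $(d,v)=v_1$, $\phi(v)=\phi(v_1)\phi(v_2)$, and the double sum factorizes into $\sum_{v_1 \mid \mathrm{rad}(d)} 1/\phi(v_1)$ times an Euler product in $v_2$. Using the identity $\sum_{v_1 \mid \mathrm{rad}(d)} 1/\phi(v_1) = \prod_{p \mid d}\bigl(1 + 1/(p-1)\bigr) = d/\phi(d)$ and identifying the $v_2$-sum as $\prod_{p \nmid dn}(1 + 1/(p(p-1)))$, the main term collapses to
\[
M = \frac{E\phi(n)}{n\phi(d)}\prod_{p \nmid dn}\!\Bigl(1 + \frac{1}{p(p-1)}\Bigr).
\]
A prime-by-prime comparison with $CE\omega(d)f(n)/d$, using $1 + 1/(p(p-1)) = (p^2-p+1)/(p(p-1))$, then closes the identification: at each prime $p \mid d$ both sides reduce to $1/\phi(p^{v_p(d)})$, at each $p \mid n$ both reduce to $(p-1)/p$, and at each $p \nmid dn$ both equal $1 + 1/(p(p-1))$. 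The coprimality $(d,n)=1$ guarantees these three cases are disjoint and jointly exhaust all primes.

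Finally, the error from the $O(d(n))$ term in $N(v)$, summed against $\mu^2(v)/\phi(v)$ over those $v$ with $N(v) > 0$ (equivalently $[d,v] \le E$), is handled by the same $v = v_1 v_2$ splitting and yields a contribution of size $\ll d(n)\,(d/\phi(d))\log E$, which is absorbed into the stated bound. The main obstacle is the Euler-product bookkeeping in the main term: one must carefully split the primes into the three classes $\{p \mid d\}$, $\{p \mid n\}$, $\{p \nmid dn\}$, invoke $(d,n)=1$ to ensure disjointness, and verify that the nontrivial cancellation between $C$, $\omega(d)$, $f(n)$ and $1/d$ on the right matches the simpler product of $\phi(n)/n$, $1/\phi(d)$ and the restricted Euler product on the left.
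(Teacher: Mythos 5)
The paper offers no in-text argument for this lemma---it is proved by citation alone, ``This is [DRHB, Lemma~3]''---so there is no proof of the paper's to compare yours against line by line. Your derivation via the convolution identity $u/\phi(u)=\sum_{v\mid u}\mu^{2}(v)/\phi(v)$, swapping the order, sieving $N(v)$ over divisors of $n$, and then factorising the $v$-sum by writing $v=v_{1}v_{2}$ with $v_{1}\mid\operatorname{rad}(d)$, $(v_{2},d)=1$, is the standard elementary route to exactly this kind of statement, and it is very close to what Heath-Brown's original proof does. I checked the main-term Euler-product identification prime by prime: for $p\mid d$ both sides give $1/\phi(p^{v_{p}(d)})$, for $p\mid n$ both give $(p-1)/p$, and for $p\nmid dn$ both give $1+1/(p(p-1))$; the coprimality $(d,n)=1$ makes the three classes disjoint, as you say, so the main term is exactly $CE\,\omega(d)f(n)/d$. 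Two points deserve explicit treatment before the argument is airtight. First, when you pass from the truncated sum over $\{v : [d,v]\le E\}$ to the full convergent sum that gives $M$, the discarded tail $\sum_{[d,v]>E}\mu^{2}(v)(d,v)/(v\phi(v))$ is not mentioned; after the same $v=v_{1}v_{2}$ split it is $\ll (d/\phi(d))\,(d/E)$ up to a $\log\log$ factor, hence contributes $\ll (\phi(n)/n)\,d/\phi(d)$ to the sum, which is admissible but should be recorded. Second, your final error bound $\ll d(n)(d/\phi(d))\log E$ is typically \emph{smaller} than the stated $(ED)^{1/2}d(n)/\phi(d)$, but ``absorbed into the stated bound'' requires $d(\log E)^{2}\ll ED$; that inequality is implicit in the size constraints on $d$ in Heath-Brown's Lemma~3 (and is trivially satisfied in the present application, where $d=1$), but the condition should be surfaced if the lemma is to be stated self-containedly, especially since the parameter $D$ in the error term is never defined in the paper.
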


\begin{proof}
This is \cite[Lemma 3]{DRHB}.
\end{proof}
\vskip5pt
\noindent
\emph{Proof of Theorem 1.1.}  Let $y = \frac12n^{\theta}$ and $Q=y n^{-\frac12} (\log n)^{-2C}$ where $C$ is as in Theorem 1.2 with $A=4$.  We shall estimate
\[
\#\{(p,u,v): n=p+uv, y \le uv < 2y, Q<u \le 2Q, (u,q)=1 \}
\]
in two different ways.  We have
\[
N \ge \twolinesum{n=k+uv}{\rm{(7.1)}} \lambda(k)
\]
with $\lambda$ as in Theorem 1.2 with $n$ in place of $x$, where (7.1) denotes the set of conditions
\beq\label{7.1}
y \le uv < 2y, \quad Q < u \le 2Q, \quad (u,n)=1.
\endq
It is an easy deduction from Theorem 1.2 that
\[
N \ge \twolinesum{Q < u \le 2Q}{(u,n)=1} \ \twolinesum{n-2y < k \le n-y}{u|n-k} \lambda(k)
\gg  \twolinesum{Q < u \le 2Q}{(u,n)=1} \frac{y}{\phi(u) \log n}\,.
\]
On taking $d=1$ in Lemma 7.2, we have
\begin{align*}
\twolinesum{Q < u \le 2Q}{(u,n)=1} \frac{1}{\phi(u)} &\gg \frac{1}{Q} \twolinesum{Q < u \le 2Q}{(u,n)=1} \frac{u}{\phi(u)}\\
& \gg f(n) + O\left(Q^{- \frac12} d(n)  \right)\\
&\gg f(n),
\end{align*}
since $f(n) \gg \phi(n)/n$ and $Q \gg n^{0.05}$. It follows that
\beq\label{7.2}
N \gg \frac{f(n) y}{\log n}\,.
\endq

We now take $\delta$ to be a small positive constant, and assume there are no solutions of $n=p+ab$ for which $1 \le a, b \le n^{\frac12 - \delta}$ and $ab < 2y$, and estimate $N$ from above.  Using the argument in \cite{DRHB}, we find that
\beq\label{7.3}
N \ll \frac{Q}{\log n} \twolinesum{F \le v < 4F}{(v,nK)=1} \frac{v}{\phi(v)}
\endq
where $F=\frac12 y Q^{-1}$ and
\[
K = \prod_{q_1 \le p < q_2} p, \quad q_1 = 2yn^{\delta-\frac12} Q^{-1}, \quad q_2 = \tfrac12 n^{\frac12 - \delta} Q^{-1}.
\]

Following Heath-Brown's application of the Selberg sieve method in \cite{DRHB}, we obtain the bound
\beq\label{7.4}
\twolinesum{F \le v < 4F}{(v,nK)=1} \frac{v}{\phi(v)}\le \frac{3C F f(n)}{G} + O\left(n^{\frac37}\right)
\endq
where, with 
\[
g(m)  = \prod_{p|m} (p-\omega(p))^{-1},
\]
we have
\[
G= \twolinesum{m < n^{\frac16}}{(m,n)=1, m|K} \mu^2(m) g(m) \ge \twolinesum{q_1 \le p < n^{\frac16}}{p\nmid n} g(p).
\]
Thus
\begin{align*}
G &\ge \sum_{q_1 \le p < n^{\frac16}} \frac{1}{p} - \sum_{p|n} \frac{1}{q_1}\\
& = - \log 6 + \log \log n - \log \log q_1 + O\left((\log q_1)^{-1} + q_1^{-1} \log n  \right).
\end{align*}
Now
\begin{align*}
q_1 &= 2 (\log n)^{2C} n^{\delta},\\
\log q_1 &= \delta \log n + O(\log \log n),\\
\log \log q_1 &= \log \delta + \log \log n + O\left((\log n)^{-\frac12}\right).
\end{align*}
It follows that
\beq\label{7.5}
G \ge - \log(6 \delta) + O\left((\log n)^{-\frac12}\right) \ge - \tfrac12 \log \delta
\endq
if $\delta$ is sufficiently small. We now see from \eqref{7.3}, \eqref{7.4}, \eqref{7.5}, that
\[
N \ll \frac{y f(n)}{\log n \log(1/\delta)},
\]
which contradicts \eqref{7.2} if $\delta$ is chosen to be sufficiently small. This completes the proof of Theorem 1.1.
\vskip20pt

\medskip\noindent
Roger Baker,\\
Department of Mathematics,\\
Brigham Young University,\\
Provo, UT 84602, USA\\
E-mail: baker@math.byu.edu

\medskip
\noindent
{Glyn Harman,}\\
{Department of Mathematics,}\\
{Royal Holloway, University of London,}\\
{Egham, Surrey TW20 0EX, UK}\\
{E-mail: G.Harman@rhul.ac.uk}


\begin{thebibliography}{99} 
\bibitem{BH}R. C. Baker and G. Harman, {On the difference between consecutive primes}. \emph{Proc. London Math. Soc.} {\bf{72}} (1996), 261-280.

\bibitem{BHP} R. C. Baker, G. Harman, and J. Pintz, {The exceptional
set for Goldbach's problem in short intervals}.  \emph{Sieve Methods, Exponential Sums and
Their Applications in Number Theory}, Cambridge University Press, Cambridge, 1997, pp. 11--54.

\bibitem{BHP2} R. C. Baker, G. Harman, and J. Pintz, {The difference between consecutive primes II}. \emph{Proc. London Math. Soc.} {\bf{83}} (2001), 532--562.

\bibitem{FGM} A. Fujii, P.X. Gallagher, and H.L. Montgomery, {Some hybrid bounds for character sums and Dirichlet L-functions}. \emph{Topics in Number Theory}, 41-57, North Holland, Amsterdam, 1976.

\bibitem{Halb} H. Halberstam and H.\,-E. Richert, \emph{Sieve Methods,}
Academic Press, London, 1974.

\bibitem{h83} G. Harman. {On the distribution of $\alpha p$ modulo one}.
\emph{J. London Math. Soc.} (2) \textbf{27} (1983), 9--18.

\bibitem{h96} G. Harman. {On the distribution of $\alpha p$ modulo one II}.
\emph{Proc. London Math. Soc.} (3) {\bf 72} (1996), 241--260.

\bibitem{GH}
G. Harman, \emph{Prime-detecting Sieves},
London Mathematical Society Monographs Series, \textbf{33}, Princeton University Press (Princeton NJ, 2007).

\bibitem{HWW}	G. Harman, N. Watt, and K. C. Wong. 
			{A new mean-value result for Dirichlet L-functions and polynomials},
			\emph{Quart. J. Math.} \textbf{55} (2004), 307--324.


\bibitem{DRHB} D.R. Heath-Brown, { Representation of an integer as a prime plus the product of two small factors}.
\emph{Math. Proc. Cambridge Phil. Soc.} {\bf 89} (1981), 29-33.

\bibitem{HuxIwan} M.N. Huxley and H. Iwaniec, {Bombieri's Theorem in short intervals}. \emph{Mathematika} \textbf{22} (1975), 188-194.

\bibitem{Kumchev} A. Kumchev.
{The difference between consecutive primes in an arithmetic progression}.
\emph{Quart. J. Math.} \textbf{53} (2002), 479--501.

\bibitem{Xlu} X. L\"u and Y. Ren,  { Representation of an integer as a prime plus the product of two small factors}. \emph{J. Number Theory},
\textbf{176} (2017), 204-210.

\bibitem {Mon71} H. L. Montgomery,
                  \emph{Topics in Multiplicative Number Theory},
                  Lecture Notes in Mathematics {227},
                  Springer-Verlag, Berlin--New York, 1971.

\bibitem{PPS} A. Perelli, J. Pintz, and S. Salerno.  {Bombieri's theorem
in short intervals}. \emph{Ann. Scuola Norm. Sup. Pisa} \textbf{11} (1984),  
529--538.

\bibitem{PPS2} A. Perelli, J. Pintz, and S. Salerno.  {Bombieri's theorem
in short intervals II}. \emph{Invent. Math.} \textbf{79} (1985), 1-9.  

\bibitem{Tim} N.M. Timofeev, {Distribution of arithmetic functions in short intervals in the mean with respect to arithmetic progressions}
(Russian). \emph{Akad. Nauk SSSR Ser. Mat.} \textbf{51} (1987), 341-362.



\end{thebibliography}
\end{document}